\numberwithin{equation}{section}
\newtheorem{thm}{Theorem}[section]
\newtheorem{cor}[thm]{Corollary}
\newtheorem{defn}[thm]{Definition}
\newtheorem{remark}[thm]{Remark}
\renewcommand{\epsilon}{\varepsilon}
\def\<#1{\langle #1\rangle}
\begin{document}{\allowdisplaybreaks[4]}


\title{Multiple chordal SLE($\kappa$) and quantum Calogero-Moser system}
\author{
    Jiaxin Zhang
    \footnotemark[1]
   }
\renewcommand{\thefootnote}{\fnsymbol{footnote}}

\footnotetext[1]{{\bf zhangjx@caltech.edu} Department of Mathematics, California Institute of Technology}

\maketitle

We study multiple chordal SLE$(\kappa)$ systems in a simply connected domain $\Omega$, where $z_1, \ldots, z_n \in \partial \Omega$ are boundary starting points and $q \in \partial \Omega$ is an additional marked boundary point.

As a consequence of the domain Markov property and conformal invariance, we show that the presence of the marked boundary point $q$ gives rise to a natural equivalence relation on partition functions. While these functions are not necessarily conformally covariant, each equivalence class contains a conformally covariant representative.

Building on the framework introduced in \cite{Dub07}, we demonstrate that in the $\mathbb{H}$-uniformization with $q = \infty$, the partition functions satisfy both the null vector equations and a dilatation equation with scaling exponent $d$.

Using techniques from the Coulomb gas formalism in conformal field theory, we construct two distinct families of solutions, each indexed by a topological link pattern of type $(n, m)$ with $2m \leq n$.

In the special case $\Omega = \mathbb{H}$ and $q = \infty$, we further show that these partition functions correspond to eigenstates of the quantum Calogero–Moser system, thereby extending the known correspondence beyond the standard $(2n, n)$ setting.
\tableofcontents

\newpage

\section{Introduction}
\subsection{Background}
\
\indent 
The Schramm--Loewner evolution $\mathrm{SLE}(\kappa)$, introduced in \cites{Sch00, LSW04}, is a family of conformally invariant random curves with a positive real parameter $\kappa > 0$. It arises as the scaling limit of interfaces in two-dimensional critical lattice models.

A parallel approach to understanding critical phenomena is provided by conformal field theory (CFT), which studies local observables through correlation functions \cites{Car96, FK04}. The rigorous interplay between SLE and CFT---often referred to as the SLE–CFT correspondence has been studied in \cites{BB03a, FW03, FK04, Dub15a, Pel19}. Under this correspondence, the central charge $c(\kappa)$ associated with $\mathrm{SLE}(\kappa)$ is given by
\[
c(\kappa) = \frac{(3\kappa - 8)(6 - \kappa)}{2\kappa}.
\]

Multiple SLE systems extend the single-curve construction to a collection of $n$ interacting random curves. In the standard chordal setup, $2n$ marked boundary points are connected in pairs by $n$ disjoint SLE-type paths, forming a configuration of type $(2n,n)$ \cites{Dub06, KL07, Law09b, FK15a, PW19, PW20}. These systems are encoded by partition functions satisfying a system of second-order partial differential equations known as the null vector equations.

In this paper, we extend the theory of standard multiple chordal SLE$(\kappa)$ (type $(2n,n)$) systems to general configurations of type $(n,m)$. A type $(n,m)$ configuration represents a topological pattern characterized by $n$ boundary points, one additional marked boundary point $u$, and $M$ non-intersecting connections (or pairings) among the $n$ boundary points. The remaining boundary points are connected to the marked point $u$.

\subsection{Multiple chordal SLE($\kappa$) systems with $\kappa>0$}

In a simply connected domain $\Omega$ with boundary points $z_1, z_2, \ldots, z_n$ and a marked interior point $q$, we define a \emph{local multiple chordal SLE($\kappa$) system} as a compatible family of probability measures  
\[
\mathbb{P}_{\left(\Omega; z_1, z_2, \ldots, z_n, u\right)}^{\left(U_1, U_2, \ldots, U_n\right)}
\]  
on $n$-tuples of continuous, non-self-crossing curves starting from $z_i$ within a localization neighborhood $U_i$, none of which contains $u$. A more precise characterization of these measures is provided in Definitions \ref{localization of measure} and \label{local multiple chordal SLE(kappa)}.
 In the upper half plane $\mathbb{H}$ with
$n$ marked boundary points $\{z_1,z_2,\ldots,z_n\}$ and one additional marked point $u =\infty$, we use the notation $\mathbb{P}_{(x_1,\ldots,x_n)}$ where $x_1 <x_2<\ldots<x_n$.

\begin{defn}[Localization of measures]\label{localization of measure}
Let $\Omega \subsetneq \mathbb{C}$ be a simply connected domain, and let $u$ be a marked boundary point. Let $z_1, z_2, \ldots, z_n$ be distinct prime ends of $\partial \Omega$, and let $U_1, U_2, \ldots, U_n$ be disjoint closed neighborhoods of $z_1, z_2, \ldots, z_n$ in $\Omega$ such that $u \notin U_j$ for all $j$ and $U_i \cap U_j = \emptyset$ for $1 \leq i < j \leq n$. 

We consider probability measures
\[
\mathbb{P}_{\left(\Omega; z_1, z_2, \ldots, z_n, u\right)}^{\left(U_1, U_2, \ldots, U_n\right)}
\]
on $n$-tuples of unparametrized continuous curves, where each curve starts at $z_j$, is contained in $U_j$, and exits $U_j$ almost surely. 

A family of such measures, indexed by different choices of neighborhoods $\left(U_1, U_2, \ldots, U_n\right)$, is called \emph{compatible} if for all $U_j \subset U_j'$, the measure $\mathbb{P}_{\left(\Omega; z_1, z_2, \ldots, z_n, u\right)}^{\left(U_1, U_2, \ldots, U_n\right)}$ is obtained from 
\[
\mathbb{P}_{\left(\Omega; z_1, z_2, \ldots, z_n, u\right)}^{\left(U_1', U_2', \ldots, U_n'\right)}
\]
by restricting each curve to the portion before it first exits the smaller domain $U_j$.
\end{defn}

\begin{figure}[h]
    \centering
    \includegraphics[width=8cm]{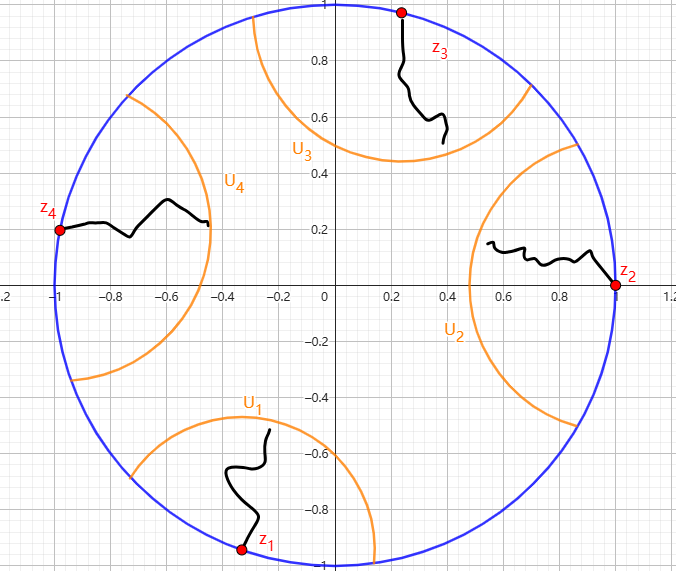 }
    \caption{Localization of multiple chordal SLE($\kappa$)} 
   
\end{figure}

\begin{defn}[Local multiple chordal SLE($\kappa$)]
The locally commuting n chordal $\mathrm{SLE}_\kappa$ is a compatible family of measures $\mathbb{P}_{\left(\Omega; z_1, z_2,\ldots,z_n,u\right)}^{\left(U_1, U_2,\ldots,U_n\right)}$ on $n$ tuples of continuous non-self-crossing curves $\left(\eta^{(1)}, \eta^{(2)},\ldots,\eta^{(n)}\right)$ for all simply connected domain $\Omega$ with $\left(z_1, z_2,\ldots,z_n,u\right)$, and $\left(U_1, U_2,\ldots,U_n\right)$ as above that satisfy additionally the conditions below. 

\begin{itemize}
    \item \textbf{Conformal invariance}: If $\varphi: \Omega \rightarrow \Omega^{\prime}$ is a conformal map fixing 0, then the pullback measure
$$
\varphi^* \mathbb{P}_{\left(\Omega^{\prime} ; \varphi\left(z_1\right), \varphi\left(z_2\right),\ldots,\varphi\left(z_n),\varphi(u)\right)\right)}^{\left(\varphi\left(U_1\right), \varphi\left(U_2\right),\right)}=\left(\mathbb{P}_{\left(\Omega ; z_1, z_2,\ldots,z_n,u\right)}^{\left(U_1, U_2,\ldots,U_n\right)}\right) .
$$

Therefore, it suffices to describe the measure when $\left(\Omega ; z_1, z_2,\ldots,z_n,u\right)=\left(\mathbb{H}; z_1,z_2,\ldots,z_n,0\right)$.
We can extend our definition to arbitrary simply-connected domain $\Omega$ with a marked interior point $u$ by pulling back via a conformal equivalence $\phi:\Omega \rightarrow \mathbb{D}$ sending $u$ to $0$.

\item
\textbf{Domain Markov property}: Let $\left(\gamma^{(1)}, \gamma^{(2)},\ldots,\gamma^{(n)}\right) \sim \mathbb{P}_{\left(\mathbb{H} ; x_1, x_2,\ldots,x_n\right)}^{\left(U_1, U_2,\ldots, U_n\right)}$ and we parametrize $\gamma^{(j)}$ by their own capacity in $\mathbb{H}$. Let $\mathbf{t}=\left(t_1, t_2,\ldots,t_n\right)$, such that $t_j$ is a stopping time for $\gamma^{(j)}$ and $\gamma_{\left[0, t_j\right]}^{(j)}$ is contained in the interior of $U_j$. Let
$$
\tilde{U}_j=U_j \backslash \gamma_{\left[0, t_j\right]}^{(j)}, \quad \tilde{\gamma}^{(j)}=\gamma^{(j)} \backslash \gamma_{\left[0, t_j\right]}^{(j)}, \quad j=1,2,\ldots,n; \quad \tilde{\Omega}=\mathbb{H} \backslash\left(\gamma_{\left[0, t_1\right]}^{(1)} \cup \gamma_{\left[0, t_2\right]}^{(2)} \cup \ldots \cup \gamma_{\left[0, t_n\right]}^{(n)}\right) .
$$

Then conditionally on $\gamma_{\left[0, t_1\right]}^{(1)} \cup \gamma_{\left[0, t_2\right]}^{(2)} \cup \ldots \cup  \gamma_{\left[0, t_n\right]}^{(n)}$, we have
$$
\left(\tilde{\gamma}^{(1)}, \tilde{\gamma}^{(2)}, \ldots, \tilde{\gamma}^{(n)}\right) \sim \mathbb{P}_{\left(\tilde{\Omega} ; \gamma_{t_1}^{(1)}, \gamma_{t_2}^{(2)},\ldots,\gamma_{t_n}^{(n)}\right)}^{\left(\tilde{U}_1, \tilde{U}_2, \ldots,\tilde{U}_n\right)} .
$$

\item \textbf{Absolute continuity with respect to independent SLE($\kappa$)}:
 In $\mathbb{H}$-uniformization, let $\left(\gamma^{(1)}, \gamma^{(2)},\ldots,\gamma^{(n)}\right) \sim \mathbb{P}_{\left(\mathbb{H} ; x_1,x_2,\ldots,x_n,u\right)}^{\left(U_1, U_2,\ldots,U_n)\right.}$. 

We assume that there exist smooth functions $b_j: \mathfrak{X}^n(\boldsymbol{x},u) \rightarrow \mathbb{R}$, where the chamber: $$\mathfrak{X}^n(\boldsymbol{x},u)=\left\{(x_1,x_2,\ldots,x_n,u) \in \mathbb{R}^n \times \mathbb{H} \mid x_1<x_2<\ldots<x_n, u \in \mathbb{H} \right\}$$ 
such that the capacity parametrized Loewner driving function $t \mapsto \theta_t^{(j)}$ of $\gamma^{(j)}$ satisfies
\begin{equation} \label{H marginal}
\left\{\begin{array}{l}
\mathrm{~d} x_{j}(t)=\sqrt{\kappa} \mathrm{d} B_{j}(t)+b_j\left(x_{1}(t),x_{2}(t),\ldots,x_{n}(t),u\right) \mathrm{d} t \\
\mathrm{~d} x_{k}(t)=\frac{2}{x_{k}(t)-x_{j}(t)} \mathrm{d} t,  k\neq j
\end{array}\right.
\end{equation}
where $B_{j}$ is one-dimensional standard Brownian motion. 

\end{itemize}
\end{defn}

In particular, the domain Markov property implies that we can first map out $\gamma_{\left[0, t_i\right]}^{(i)}$ using $g_{t_i}^{(i)}$, then mapping out $g_{t_i}^{(i)}\left(\gamma_{\left[0, t_j\right]}^{(j)}\right)$, or vice versa. The image has the same law regardless of the order in which we map out the curves. This is also known as the commutation relations or reparametrization symmetry.

The core principle in our paper is the SLE-CFT correspondence. SLE and multiple SLE systems can be coupled to a conformal field in two key aspects:
\begin{itemize} 
\item The level-two degeneracy equations for the conformal fields coincide with the null vector equations for the SLE partition functions. 
\item The correlation functions of the conformal fields serve as martingale observables for the SLE processes. \end{itemize}

We make initial progress in studying general type $(n,m)$ multiple chordal SLE($\kappa$) systems by exploring the following aspects:
\begin{itemize}
    \item Commutation relations and conformal invariance
    \item Solution space of the null vector equations.
\end{itemize}

Here, commutation relations for multiple SLE($\kappa$) mean that we can grow the multiple SLE($\kappa$) curves in arbitrary order and yield the same result.

Extending the results in \cite{Dub07} on commutation relations, we derive analogous commutation relations for multiple chordal SLEs in the upper half plane $\mathbb{H}$ with
$n$ starting $x_1,x_2,\ldots,x_n \in \partial{\mathbb{H}}$ and one additional marked point $u =\infty$, see section \ref{reparametrization symmetry}. 

The family of measure $\mathbb{P}_{(x_1,\ldots,x_n)}$ of a general multiple chordal SLE($\kappa$) system is encoded by a partition function 
$\psi(\boldsymbol{x}):\left\{\left(x_1, x_2,\ldots,x_n\right) \in \mathbb{R}^n \mid x_1<x_2<\ldots<x_n\right\} \rightarrow \mathbb{R}_{>0}$, see section (\ref{reparametrization symmetry}) for detailed explanation.

\begin{thm}
For a local multiple chordal SLE(\(\kappa\)) system, there exists a positive function \( \psi(\boldsymbol{x}) \) such that the drift term \( b_j \) in the marginal laws satisfies:
\begin{equation}
    b_j = \kappa \frac{\partial_j \psi}{\psi}, \quad j = 1,2,\dots,n.
\end{equation}  

As a consequence of the commutation relations, \( \psi \) satisfies the null vector equation  
\begin{equation} \label{null vector equations in mathbb H infty 0 constant}
\frac{\kappa}{2} \, \partial_{ii} \psi + \sum_{j \neq i} \frac{2}{x_j - x_i} \, \partial_i \psi 
+ \left(1 - \frac{6}{\kappa} \right) \sum_{j \neq i} \frac{\psi}{(x_j - x_i)^2} = 0,
\end{equation}

Moreover, by conformal (rotational) invariance, for any conformal map \( \tau \in {\rm Aut}(\mathbb{H},\infty) \) of the form \( \tau(z) = az + b \), the drift term \( b_i(\boldsymbol{x},u) \) transforms as a pre-Schwarzian form:  
\begin{equation}
    b_i = \tau^{\prime} \widetilde{b}_i \circ \tau + \frac{6-\kappa}{2} \left(\log \tau^{\prime}\right)^{\prime} = a \cdot \widetilde{b}_i \circ \tau.
\end{equation}  

In particular, there exists a dilation constant \( d \) such that  
\begin{equation} \label{conformal invariant Aut H infty}
\psi(x_1, x_2, \dots, x_n, \infty) 
= a^{\frac{n(\kappa-6)}{2\kappa} - d} \psi(ax_1+b, ax_2+b, \dots, ax_n+b, \infty).
\end{equation}  
\end{thm}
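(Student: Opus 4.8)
The plan is to derive the three assertions from the three defining axioms via the commutation-relation machinery of \cite{Dub07}, in the order: existence of $\psi$, then the null vector equation, then the covariance. \emph{Existence of $\psi$.} For each $j$, the absolute-continuity axiom \eqref{H marginal} exhibits the generator of the capacity-time diffusion that grows the $j$-th curve,
\[
\mathcal{A}_j \;=\; \frac{\kappa}{2}\,\partial_{jj} \;+\; b_j\,\partial_j \;+\; \sum_{k\neq j}\frac{2}{x_k-x_j}\,\partial_k .
\]
The domain Markov property together with the reparametrization symmetry (growing curve $i$ then curve $j$ yields the same law as the reverse order) requires the commutator $[\mathcal{A}_i,\mathcal{A}_j]$ to introduce no second-order direction beyond $\partial_{ii}$ and $\partial_{jj}$. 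I would compute $[\mathcal{A}_i,\mathcal{A}_j]$ and separate it by differential order: the principal third-order parts cancel since $\partial_{ii}$ and $\partial_{jj}$ commute, while the coefficient of the mixed derivative $\partial_{ij}$ equals $\kappa(\partial_i b_j-\partial_j b_i)$ and cannot be absorbed, so it must vanish. This is the closedness relation $\partial_i b_j=\partial_j b_i$; since the chamber $\mathfrak{X}^n(\boldsymbol{x},u)$ is simply connected, the $1$-form $\kappa^{-1}\sum_j b_j\,dx_j$ is exact, giving a potential $U$ with $b_j=\kappa\,\partial_j U$, and $\psi:=e^{U}>0$ then satisfies $b_j=\kappa\,\partial_j\psi/\psi$.

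\emph{Null vector equation.} With $b_j=\kappa\,\partial_j\log\psi$ available, I would read off the PDE through the SLE--CFT martingale rather than through the remaining commutator terms. Under the reference measure in which the $i$-th curve is an ordinary $\mathrm{SLE}(\kappa)$ driven by $\sqrt{\kappa}\,dB_i$ while the marked points $x_k$ are transported by the Loewner flow $\dot x_k=2/(x_k-x_i)$, the conformally covariant observable $M_t=\prod_k g_t'(x_k)^{h}\,\psi(\boldsymbol{x}(t))$, with boundary weight $h=(6-\kappa)/(2\kappa)$, must be a local martingale. Applying Itô's formula together with $\tfrac{d}{dt}\log g_t'(x_k)=-2/(x_k-x_i)^2$, the vanishing of the $dt$-drift of $M_t$ is exactly the null vector equation \eqref{null vector equations in mathbb H infty 0 constant}, whose zeroth-order coefficient $1-6/\kappa$ is $-2h$; the drift $b_i$ is precisely the Girsanov tilt that turns this reference martingale into the marginal law \eqref{H marginal}.

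\emph{Covariance.} Finally I would restrict the conformal-invariance axiom to $\tau\in\mathrm{Aut}(\mathbb{H},\infty)$, $\tau(z)=az+b$. Invariance of the measures forces $\psi$ to transform with the $\mathrm{SLE}(\kappa)$ boundary exponent $h$ at each marked point, $\psi(\boldsymbol{x})=\prod_i\tau'(x_i)^{h}\,\widetilde\psi(\tau(\boldsymbol{x}))$; differentiating $\log\psi$ and using $b_i=\kappa\,\partial_i\log\psi$ with $\kappa h=(6-\kappa)/2$ yields the pre-Schwarzian transformation $b_i=\tau'\,\widetilde b_i\circ\tau+\tfrac{6-\kappa}{2}(\log\tau')'$. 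Since $\tau$ is affine, $\tau'\equiv a$ and $(\log\tau')'=0$, leaving $b_i=a\,\widetilde b_i\circ\tau$; substituting $b_i=\kappa\,\partial_i\log\psi$ and $\widetilde\psi=\psi$ for the covariant representative gives $\partial_i\log[\psi(\boldsymbol{x})/\psi(a\boldsymbol{x}+b)]=0$ for every $i$, hence $\psi(\boldsymbol{x})=C(a,b)\,\psi(a\boldsymbol{x}+b)$ with $C$ independent of $\boldsymbol{x}$. The semigroup law of affine maps then forces $C(a,b)=a^{\alpha}$, and the $n$ boundary weights together with the residual scaling exponent $d$ of the dilatation equation identify $\alpha=n(\kappa-6)/(2\kappa)-d$, which is \eqref{conformal invariant Aut H infty}.

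The main obstacle is the rigorous passage from the reparametrization symmetry to its algebraic consequences: one must justify that the commutation of the two growth diffusions genuinely forces the $\partial_{ij}$ coefficient to vanish (the precise integrability statement for the two second-order generators), and then correctly assign the weight $h=(6-\kappa)/(2\kappa)$ to the Loewner transport of the derivatives $g_t'(x_k)$ when extracting the null vector equation. A secondary difficulty is the bookkeeping at the marked point $u=\infty$: one must verify that the equivalence relation it induces neither obstructs the integration of the drift $1$-form nor spoils the well-definedness of the dilation constant $d$ on each equivalence class --- precisely where the failure of plain conformal covariance flagged in the abstract enters.
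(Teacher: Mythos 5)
There is a genuine gap in your derivation of the null vector equation. You propose to ``read off the PDE'' from the requirement that $M_t=\prod_k g_t'(x_k)^{h}\,\psi(\boldsymbol{x}(t))$ be a local martingale under the reference $\mathrm{SLE}(\kappa)$ measure, but nothing in the axioms grants you that martingale property: the Radon--Nikodym derivative of the tilted marginal with respect to the reference measure is the stochastic exponential of $\int (b_i/\sqrt{\kappa})\,dB$, which by It\^o's formula equals $M_t/M_0$ only up to the extra factor $\exp\bigl(-\int_0^t (\mathcal{L}_i\psi/\psi)\,ds\bigr)$. Asserting that this factor is trivial is exactly the statement $\mathcal{L}_i\psi=0$ you are trying to prove, so the step is circular. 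The paper's (i.e.\ Dub\'edat's) route is different: the \emph{first-order} terms of the commutator identity $[\mathcal{M}_i,\mathcal{M}_j]=\tfrac{4}{(x_i-x_j)^2}(\mathcal{M}_j-\mathcal{M}_i)$ --- not just the vanishing of the $\partial_i\partial_j$ coefficient, which only gives closedness of the drift $1$-form --- yield the null vector equation with an undetermined additive term $h_i(x_i)\psi$ (Theorem \ref{commutation relation for chordal with marked point u infty}). One then kills $h_i$ by a separate argument: translation and dilatation invariance of $b_i$ under $\mathrm{Aut}(\mathbb{H},\infty)$ force $h_i$ to be translation invariant and homogeneous of degree $-2$, hence identically zero (Theorem \ref{conformal invariance aut H infty}(i)). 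Your proposal contains no substitute for this step.

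A secondary issue is the logical direction of your covariance argument. You \emph{assume} $\psi(\boldsymbol{x})=\prod_i\tau'(x_i)^{h}\widetilde\psi(\tau(\boldsymbol{x}))$ and differentiate to get the pre-Schwarzian law for $b_i$; but the paper stresses that partition functions in an equivalence class need \emph{not} be conformally covariant, so this transformation rule cannot be taken as given. The correct order is the reverse: the pre-Schwarzian transformation of $b_i$ comes from the Loewner coordinate change (Theorem \ref{random coordinate change}, via It\^o and the hcap time change, producing the $\tfrac{\kappa-6}{2}\Psi_t''/(\Psi_t')^2$ term), and only then does integrating $\partial_i\bigl(\log\psi-\log\psi\circ\tau\bigr)=0$ produce an $\boldsymbol{x}$-independent constant $C(a,b)$. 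Your character argument for the affine group (the abelianization kills the translation part, leaving $C=a^{\alpha}$) is a clean replacement for the paper's explicit computation with $D_a\circ L_{b/a}=L_b\circ D_a$, and the first step (exactness of $\kappa^{-1}\sum_j b_j\,dx_j$ on the simply connected chamber) matches the cited mechanism; but as written the proof does not close without repairing the two points above.
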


A significant difference between the general $(n,m)$ multiple chordal SLE($\kappa$) systems and $(2n,n)$ type multiple chordal SLE($\kappa$) systems arises when we study their conformal invariance properties. Although the multiple chordal SLE($\kappa$) systems are conformally invariant, the partition functions in its corresponding equivalence classes do not necessarily exhibit conformal covariance when we have an extra marked point.

We define two partition functions as \emph{equivalent} if and only if they induce identical multiple chordal SLE($\kappa$) systems. Equivalent partition functions differ a by multiplicative function $f(u)$.
\begin{equation}
\tilde{\psi}=f(u)\cdot\psi
\end{equation}
where $f(u)$ is an arbitrary positive real smooth function depending on the marked boundary point $u$
A simple example that violates conformal covariance is
when $f(u)$ is not conformally covariant. 
However, within each equivalence class, it is still possible to find at least one conformally covariant partition function.

Following \cite{FK15c} on solution space of the null vector equations for partition functions of multiple chordal SLE($\kappa$), we construct two types of solutions to the null vector equations and Ward's identities for partition functions of multiple radial SLE($\kappa$) via screening method in conformal field theory, see section \ref{Classification of screening solutions}. 

\begin{thm}
\label{solution space to null and ward}
The following two types of Coulomb gas integrals (see definitions in Section~\ref{Classification of screening solutions}) solve the null vector equation \eqref{null vector equations in mathbb H infty 0 constant} and the dilatation equation \eqref{conformal invariant Aut H infty}:
\begin{itemize}
    \item[(1)] For any link pattern \( \alpha \in LP(n,m) \), with integers \( m,n \) satisfying \( 1 \leq m \leq \frac{n}{2} \), the corresponding Coulomb gas integral \( \mathcal{J}_{\alpha}^{n,m}(\boldsymbol{\boldsymbol{x}}) \) solves the null vector equations ~\eqref{null vector equations in mathbb H infty 0 constant} and dilatation equation ~\eqref{conformal invariant Aut H infty} with 
        \[
    d = \frac{n(\kappa - 6)}{2\kappa} - \lambda_{(b)}(u)=\frac{n(\kappa - 6)}{2\kappa}
    - \frac{4m^2 - 4mn + n^2 - 4m + 2n}{\kappa}
    - \frac{2m - n}{2},
    \]
    where
    \[
    \lambda_{(b)}(u) = \frac{(2m - n)^2}{\kappa} - \frac{2(2m - n)}{\kappa} + \frac{2m - n}{2}.
    \]
    is the conformal dimension at $u$.

    \item[(2)] For any link pattern \( \alpha \in LP(n,m) \), with integers $m,n$ and \( 1 \leq m \leq \frac{n}{2} \), the corresponding Coulomb gas integral \( \mathcal{K}_{\alpha}^{n,m}(\boldsymbol{\boldsymbol{x}}) \) solves the null vector equations ~\eqref{null vector equations in mathbb H infty 0 constant} and dilatation equation ~\eqref{conformal invariant Aut H infty} with 
        \[
    d= \frac{n(\kappa - 6)}{2\kappa} - \lambda_{(b)}(u)=\frac{n(\kappa - 6)}{2\kappa}
    - \frac{4m^2 - 4mn + n^2 - 4m + 2n}{\kappa}
    + \frac{2m - n - 2}{2},
    \]
    where
    \[
    \lambda_{(b)}(u) = \frac{(2m - n)(2m - n - 2)}{\kappa} - \frac{2m - n - 2}{2}.
    \]
    is the conformal dimension at $u$.
\end{itemize}
\end{thm}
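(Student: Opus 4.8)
The plan is to realize each partition function as a Coulomb gas (Dotsenko--Fateev) correlation function and to verify the two equations by the standard screening mechanism, adapting the machinery of \cite{FK15c}. Concretely, I would represent $\mathcal{J}^{n,m}_{\alpha}$ in the schematic form
\[
\mathcal{J}^{n,m}_{\alpha}(\boldsymbol{x})=\prod_{1\le i<j\le n}(x_j-x_i)^{2/\kappa}\int_{\Gamma_\alpha}\prod_{k=1}^{m}\prod_{i=1}^{n}(w_k-x_i)^{-4/\kappa}\prod_{1\le k<l\le m}(w_k-w_l)^{8/\kappa}\,dw_1\cdots dw_m,
\]
in which each boundary point $x_i$ carries a degenerate $\phi_{1,2}$ vertex operator of weight $h_{1,2}=\tfrac{6-\kappa}{2\kappa}$, the $m$ integration (screening) variables $w_k$ carry weight-one charges, and the contour $\Gamma_\alpha$ is dictated by the link pattern $\alpha\in LP(n,m)$, with one screening arc attached to each of the $m$ pairings. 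The companion family $\mathcal{K}^{n,m}_{\alpha}$ is obtained by shifting the residual charge forced at the marked point $u=\infty$ by one unit of the dual screening charge $\alpha_+=\sqrt{\kappa}/2$; this is the only structural change between the two families, and it is what produces the two distinct weights $\lambda_{(b)}(u)$.

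For the null vector equations \eqref{null vector equations in mathbb H infty 0 constant}, write $\mathcal{L}_i$ for the second-order differential operator on their left-hand side. The key point is that $\mathcal{L}_i$ is precisely the level-two BPZ operator associated with the degenerate weight $h_{1,2}$ at $x_i$ (note that the coefficient $1-6/\kappa$ equals $-2h_{1,2}$). I would apply $\mathcal{L}_i$ directly to the integrand and show, by a direct manipulation of the logarithmic derivatives of the power-law factors, that
\[
\mathcal{L}_i(\text{integrand})=\sum_{k=1}^{m}\partial_{w_k}G_{i,k}(\boldsymbol{x},\boldsymbol{w})
\]
is a total derivative in the screening variables, each $G_{i,k}$ being an explicit rational multiple of the integrand. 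This identity is exactly where the weight-one normalization of the screening charges and the value $h_{1,2}$ enter essentially. Integrating over $\Gamma_\alpha$ then kills the right-hand side, provided the endpoints of the screening contours are chosen so that boundary terms vanish, giving $\mathcal{L}_i\mathcal{J}^{n,m}_{\alpha}=0$, and identically for $\mathcal{K}^{n,m}_{\alpha}$.

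For the dilatation equation \eqref{conformal invariant Aut H infty} I would exploit the manifest homogeneity of the integral. Under $x_i\mapsto ax_i$ (and $w_k\mapsto aw_k$, $dw_k\mapsto a\,dw_k$) the prefactor, the $x$--$w$ and $w$--$w$ interactions, and the $m$-dimensional measure each scale by an explicit power of $a$; summing the exponents gives the total scaling degree
\[
\frac{n(n-1)-4nm+4m(m-1)}{\kappa}+m .
\]
The remaining ingredient is the conformal weight at $u=\infty$, fixed by charge neutrality as $\lambda_{(b)}(u)=h_{\alpha_u}$ with $\alpha_u=2\alpha_0+(2m-n)\kappa^{-1/2}$ for $\mathcal{J}$ and $\alpha_u-\alpha_+$ for $\mathcal{K}$. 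Substituting $2\alpha_0=(\kappa-4)/(2\sqrt{\kappa})$ reproduces the two stated expressions for $\lambda_{(b)}(u)$, hence the two values $d=\tfrac{n(\kappa-6)}{2\kappa}-\lambda_{(b)}(u)$. A useful internal check is that the scaling degree above equals exactly $\lambda_{(b)}(u)-n\,h_{1,2}$ in case~(1), confirming the bookkeeping.

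The main obstacle is the pair of analytic points underlying the screening argument: first, establishing the total-derivative identity $\mathcal{L}_i(\text{integrand})=\sum_k\partial_{w_k}G_{i,k}$ in closed form for arbitrary $n,m$ rather than case by case; and second, verifying that the contours $\Gamma_\alpha$ can be chosen — as genuine arcs, or as Pochhammer-type double loops when $\kappa$ renders the integrand multivalued with non-vanishing naive endpoint contributions — so that both convergence holds and the boundary terms from the total derivative vanish. Correctly matching each link pattern $\alpha$ to an admissible, non-degenerate contour system (so that $\mathcal{J}^{n,m}_{\alpha}$ is not identically zero and the $|LP(n,m)|$ solutions remain linearly independent) is the most delicate part. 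By contrast, once the charge at $u$ is identified, the dilatation identity reduces to the homogeneity count above.
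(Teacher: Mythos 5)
Your proposal follows essentially the same route as the paper: the paper also realizes $\mathcal{J}^{n,m}_{\alpha}$ and $\mathcal{K}^{n,m}_{\alpha}$ as Dotsenko--Fateev integrals of a master function with degenerate charges at the $x_i$, screening charges integrated over Pochhammer contours attached to the link pattern, and a residual charge at $u$ fixed by neutrality (with the excited family $\mathcal{K}$ obtained by inserting one additional dual screening charge encircling $u$), and it likewise reduces the null vector equations to the standard total-derivative/screening mechanism of \cite{FK15c} and obtains $d$ from the homogeneity count and the conformal dimension $\lambda_{(b)}(u)$. If anything, your outline makes explicit the two analytic steps (the level-two total-derivative identity and the admissibility of the contours $\Gamma_\alpha$) that the paper itself only asserts by reference, and your internal check that the scaling degree equals $\lambda_{(b)}(u)-n h_{1,2}$ is consistent with the stated formulas.
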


Here, \( \alpha \) denotes the integration contour, and \( LP(n,m) \) represents the set of all possible multiple integration contours with \( n \) boundary points and \( m \) integration variables. The abbreviation \( LP \) stands for link pattern, which is defined in Section~\ref{Classification of screening solutions}.

A comprehensive analysis of the linear independence among the constructed screening solutions is left for future investigation. The problem of characterizing the full solution space to the null vector equations, particularly in the chordal setting with a marked boundary point, remains open.

The classification of multiple chordal $\mathrm{SLE}(\kappa)$ systems is connected to the study of positive solutions to the null vector equations and Ward identities. These positive solutions determine the admissible partition functions and thus correspond to the space of conformally invariant probability measures and commutation properties of the SLE curves.

\subsection{Relations to quantum Calogero-Moser systems}

We show that a partition function satisfying the null vector equations (\ref{null vector equations in mathbb H infty 0 constant}) corresponds to an eigenfunction of the quantum Calogero-Sutherland Hamiltonian, as first discovered in \cite{Car04}.

\begin{thm}\label{CM results kappa>0}
The multiple chordal SLE($\kappa$) is described by the partition function $\mathcal{Z}(\boldsymbol{x})$, we have
\begin{equation}
\mathcal{L}_j \mathcal{Z}(\boldsymbol{x})= h \mathcal{Z}(\boldsymbol{x})   
\end{equation}
where $\mathcal{L}_j$ is the null vector differential operator:
\begin{equation}
\mathcal{L}_j=  \frac{\kappa}{2}\left(\frac{\partial}{\partial x_j}\right)^2 +\sum_{k \neq j}\left( \left(\frac{2}{x_k-x_j}\right) \frac{\partial}{\partial x_k}-\frac{6-\kappa}{\kappa}\frac{1}{\left(x_k-x_j\right)^2}\right)  
\end{equation}

\begin{itemize}

\item[(i)] We transform the partition function $\mathcal{Z}(\boldsymbol{x})$ into an eigenfunction of quantum Calogero-Moser system by multiplying Coulomb gas correlation $\Phi_{\frac{1}{\kappa}}^{-1}(\boldsymbol{x})$ 

\begin{equation}
\tilde{\mathcal{Z}}(\boldsymbol{x})=\Phi_{\frac{1}{\kappa}}^{-1}(\boldsymbol{x})\mathcal{Z}(\boldsymbol{x})   
\end{equation}

where $$\Phi_{r}(\boldsymbol{x})=\prod_{1 \leq j<k \leq n}\left(x_j-x_k\right)^{-2r}$$

 Then $\tilde{\mathcal{Z}}(\boldsymbol{x})$ satisfies 
 $$\left(\Phi_{\frac{1}{\kappa}}^{-1} \cdot \mathcal{L}_j \cdot \Phi_{\frac{1}{\kappa}}\right)  \tilde{\mathcal{Z}}(\boldsymbol{x}) = h\tilde{\mathcal{Z}}(\boldsymbol{x})$$  where the differential operator $\Phi_{\frac{1}{\kappa}}^{-1} \cdot \mathcal{L}_j \cdot \Phi_{\frac{1}{\kappa}}$ is given by

\begin{equation}
\begin{aligned}
\Phi_{\frac{1}{\kappa}}^{-1} \cdot \mathcal{L}_j \cdot \Phi_{\frac{1}{\kappa}}= & \frac{\kappa}{2} \partial_j^2- F_j \partial_j+\frac{1}{2\kappa} F_j^2-\frac{1}{2} F_j^{\prime} -\sum_{k \neq j}\left(f_{j k}\left(\partial_k-\frac{1}{\kappa} F_k\right)-\frac{6-\kappa}{2\kappa} f_{j k}^{\prime}\right) \\
&=\frac{\kappa}{2} \partial_j^2- \sum_{k} f_{jk}(\partial_j+\partial_k)\\
&-\frac{1}{\kappa}\left[\sum_k \sum_{l \neq k} f_{j k} f_{j l}-2\sum_{k\neq j} f_{j k}^2\right]- F_{j}'
\end{aligned}   
\end{equation}

 The sum of the null vector differential operators $\mathcal{L}=\sum \mathcal{L}_j$ is given by
\begin{equation}
\Phi_{-\frac{1}{\kappa}} \cdot \mathcal{L} \cdot \Phi_{\frac{1}{\kappa}}=\kappa H_n\left(\frac{8}{\kappa}\right)
\end{equation}

where $H_{n}(\beta)$ with $\beta=\frac{8}{\kappa}$ is  the quantum Calogero-Moser Hamiltonian
$$H_n(\beta)=\sum_{j=1}^n \frac{1}{2} \frac{\partial^2}{\partial x_j^2}-\frac{\beta(\beta-2)}{16} \sum_{1 \leq j<k \leq n} \frac{1}{\sin ^2\left(\frac{x_j-x_k}{2}\right)}
$$
\item[(ii)]
The commutation relation between growing two SLEs can be expressed as
$$[\mathcal{L}_j,\mathcal{L}_k]= \frac{1}{\sin^2(\frac{x_j-x_k}{2})}(\mathcal{L}_k-\mathcal{L}_j)$$
\end{itemize}
then
$$[\mathcal{L}_j,\mathcal{L}_k]\mathcal{Z}(\boldsymbol{x})= \frac{1}{\sin^2(\frac{x_j-x_k}{2})}(\mathcal{L}_k-\mathcal{L}_j)\mathcal{Z}(\boldsymbol{x})=0$$
\end{thm}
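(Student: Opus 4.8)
The plan is to treat $\mathcal{Z}$ as a solution of the null vector equations \eqref{null vector equations in mathbb H infty 0 constant} and the dilatation equation \eqref{conformal invariant Aut H infty}, these being the only properties of $\mathcal{Z}$ I will use, and to prove the eigenvalue equation, part (i) and part (ii) in turn. Throughout, $f_{jk}$ denotes the structure function $2/(x_j-x_k)$ in the chordal $\mathbb{H}$-picture, respectively its trigonometric counterpart $\cot\bigl((x_j-x_k)/2\bigr)$ in the radial coordinate in which $H_n(\beta)$ is written; the single feature of $f_{jk}$ I will exploit is the Calogero--Moser three-body identity $f_{jk}f_{kl}+f_{kl}f_{lj}+f_{lj}f_{jk}=\mathrm{const}$ for distinct indices.

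\emph{Eigenvalue equation.} With $u=\infty$ the terms attached to the marked point drop out of \eqref{null vector equations in mathbb H infty 0 constant}, so $\mathcal{Z}$ satisfies the homogeneous relation $\mathcal{L}_j^{\mathrm{NV}}\mathcal{Z}=0$, where $\mathcal{L}_j^{\mathrm{NV}}$ agrees with $\mathcal{L}_j$ before the passage to radial coordinates. I would then transport this equation through the conformal map sending $\partial\mathbb{H}$ to the circle: each boundary field of weight $h_{1,2}=(6-\kappa)/(2\kappa)$ contributes a derivative factor, and the Jacobian together with the behaviour at the image of $\infty$ generates purely constant terms. Absorbing those constants converts the right-hand side from $0$ into $h\mathcal{Z}$, with $h$ fixed by the scaling exponent $d$ and the conformal weight $\lambda_{(b)}(u)$ recorded in Theorem \ref{solution space to null and ward}. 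The constant is the same for every $j$ because the configuration is symmetric under relabelling the points and obeys the commutation relations, so a single eigenvalue $h$ serves all $\mathcal{L}_j$.

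\emph{Part (i).} Here I would compute the similarity transform by hand. Setting $F_j=-\kappa\,\partial_j\log\Phi_{1/\kappa}=\sum_{k\neq j}f_{jk}$, conjugating $\tfrac{\kappa}{2}\partial_j^2$ yields $\tfrac{\kappa}{2}\partial_j^2-F_j\partial_j+\tfrac{1}{2\kappa}F_j^2-\tfrac12 F_j'$, each transport term $\tfrac{2}{x_k-x_j}\partial_k$ becomes $-f_{jk}\bigl(\partial_k-\tfrac1\kappa F_k\bigr)$, and the quadratic potential becomes $\tfrac{6-\kappa}{2\kappa}f_{jk}'$; this is precisely the first displayed form. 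Combining $-F_j\partial_j$ with $-\sum_k f_{jk}\partial_k$ gives the symmetric drift $-\sum_k f_{jk}(\partial_j+\partial_k)$, while the three-body identity reorganizes the chain products $f_{jk}F_k$ into the star products $\sum_k\sum_{l\neq k}f_{jk}f_{jl}$ centered at $j$, producing the second displayed form. Finally I would sum over $j$: the drift terms cancel pairwise since $f_{jk}$ is odd and $\partial_j+\partial_k$ symmetric, and the genuine three-body contributions cancel by the identity, leaving only the two-body potential. Matching the residual coupling --- the bare $\tfrac{6-\kappa}{\kappa}$ dressed by the $\Phi_{1/\kappa}$-conjugation --- with $\tfrac{\beta(\beta-2)}{16}$ identifies $\Phi_{-1/\kappa}\,\mathcal{L}\,\Phi_{1/\kappa}$ with $\kappa H_n(\beta)$ at $\beta=8/\kappa$.

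\emph{Part (ii) and the main obstacle.} For the commutator I would expand $[\mathcal{L}_j,\mathcal{L}_k]$ directly: the second-order parts commute, and the first- and zeroth-order remainders collapse, again via the three-body identity, into the stated multiple $\tfrac{1}{\sin^2((x_j-x_k)/2)}(\mathcal{L}_k-\mathcal{L}_j)$. Combined with the eigenvalue equation this gives $[\mathcal{L}_j,\mathcal{L}_k]\mathcal{Z}=\tfrac{1}{\sin^2((x_j-x_k)/2)}(h-h)\mathcal{Z}=0$, which is exactly the order-independence of growing the curves. I expect the main obstacle to be the zeroth-order bookkeeping in part (i): keeping track of the three distinct quadratic pieces --- $F_j^2$, the chain terms $f_{jk}F_k$, and the derivative terms $f_{jk}'$ --- and confirming that the three-body identity forces them to reorganize into precisely the two-body $\sin^{-2}$ potential with coupling $\tfrac{\beta(\beta-2)}{16}$, with no residual three-body interaction surviving the summation over $j$.
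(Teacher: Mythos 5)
Your part (i) is in substance the paper's own computation: the paper conjugates by $\Phi_{1/\kappa}$ using $\partial_j \Phi_r = -r\,\Phi_r F_j$ together with the quadratic identity $\sum_j F_j^2 = -2\sum_j F_j'$, and for $f(x)=2/x$ that identity is exactly your three-body identity (the cyclic sum $f_{jk}f_{kl}+f_{kl}f_{lj}+f_{lj}f_{jk}$ vanishes because $\tfrac1{ab}+\tfrac1{bc}+\tfrac1{ca}=\tfrac{a+b+c}{abc}=0$ when $a+b+c=0$). So the conjugation and the summation over $j$ match the paper's route.

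The genuine gap is in your treatment of the eigenvalue and of part (ii). The paper never passes to the circle: in the $\mathbb{H}$-uniformization with $u=\infty$, the operator $\mathcal{L}_j$ \emph{is} the null vector operator of \eqref{null vector equations in mathbb H infty 0 constant}, so $\mathcal{L}_j\mathcal{Z}=0$, i.e.\ $h=0$, and the transformed function has eigenvalue $E=0$ --- no transport, no Jacobian, no constants to absorb. Your proposed detour through radial coordinates is both unnecessary and unexecuted: you would have to compute the claimed ``purely constant terms'' explicitly, check they are independent of $j$, and redo the entire conjugation with $f=\cot$ in place of $f=2/x$; none of that is done. (The $\sin^{-2}$ in the displayed Hamiltonian and commutator is an artifact carried over from the radial companion paper; in the rational chordal variables the paper's own commutation relation \eqref{commutation relation of generators} reads $[\mathcal{M}_i,\mathcal{M}_j]=\tfrac{4}{(x_i-x_j)^2}(\mathcal{M}_j-\mathcal{M}_i)$, and the corresponding Calogero--Moser potential is $1/(x_j-x_k)^2$.) For part (ii) the paper simply cites that established relation, whereas you assert that a direct expansion ``collapses, again via the three-body identity, into the stated multiple'' --- that collapse is the entire content of the claim and is left unverified, and the coefficient a correct chordal computation produces is $4/(x_j-x_k)^2$, not $\sin^{-2}\bigl((x_j-x_k)/2\bigr)$. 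The final step $[\mathcal{L}_j,\mathcal{L}_k]\mathcal{Z}=(h-h)(\cdots)=0$ is fine once the commutation relation and the eigenvalue equation are in hand.
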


Notably, these solutions to the null vector PDE system in section \ref{Classification of screening solutions} yield eigenstates of the Calogero-Moser system beyond the eigenstates built upon the fermionic ground states.

\newpage

\section{Conformal covariance of partition functions} \label{Commutation relations and conformal invariance}

\subsection{Transformation of Loewner flow under coordinate change}
\label{transformation of Loewner under coordinate change}

In this section we show that the Loewner chain of a curve, when viewed in a different coordinate chart, is a time reparametrization of the Loewner chain in the standard coordinate chart but with different initial conditions. This result serves as a preliminary step towards understanding the local commutation relations and the conformal invariance of multiple SLE($\kappa$) systems.

\begin{thm}[Loewner coordinate change in $\mathbb{H}$]
\label{Loewner coordinate change in H}

Let $\gamma = \gamma(t)$ be a continuous, non-self-crossing curve in the closed upper half-plane $\overline{\mathbb{H}}$, with $\gamma(0) = x \in \mathbb{R}$ and $\gamma((0, t]) \subset \mathbb{H}$. Assume that $\gamma$ is generated by the Loewner chain
\[
\partial_t g_t(z) = \frac{2}{g_t(z) - W_t}, \quad \dot{W}_t = b(W_t, g_t(z_1), \ldots, g_t(z_m)), \quad g_0(z) = z, \quad W_0 = x,
\]
for some function $b : \mathbb{R} \times \mathbb{C}^m \to \mathbb{R}$. To simplify notation, we write $\dot{W}_t = b(W_t)$, with the understanding that $b$ may depend implicitly on the evolution of marked points $g_t(z_j)$.

Let $\Psi : \mathcal{N} \to \mathbb{H}$ be a conformal map defined on a neighborhood $\mathcal{N}$ of $x$, such that $\gamma([0,T]) \subset \mathcal{N}$ for some $T > 0$, and $\Psi$ maps $\partial \mathcal{N} \cap \mathbb{R}$ into $\mathbb{R}$. Define the image curve $\widetilde{\gamma}(t) := \Psi \circ \gamma(t)$.

Let $\widetilde{g}_t$ be the conformal map from $\mathbb{H} \setminus \widetilde{\gamma}([0,t])$ onto $\mathbb{H}$ normalized at infinity by $\widetilde{g}_t(z) = z + o(1)$ as $z \to \infty$. Define
\[
\Psi_t := \widetilde{g}_t \circ \Psi \circ g_t^{-1}.
\]
Then the Loewner chain $\widetilde{g}_t$ satisfies
\[
\partial_t \widetilde{g}_t(z) = \frac{2 \Psi_t'(W_t)^2}{\widetilde{g}_t(z) - \widetilde{W}_t}, \quad \widetilde{g}_0(z) = z, \quad \widetilde{W}_0 = \Psi(x),
\]
where the new driving function is given by
\[
\widetilde{W}_t = \widetilde{g}_t \circ \Psi \circ \gamma(t) = \Psi_t(W_t).
\]

The image curve $\widetilde{\gamma}$ is parameterized such that its half-plane capacity satisfies
\[
\operatorname{hcap}(\widetilde{\gamma}[0, t]) = 2 \sigma(t), \quad \text{where} \quad \sigma(t) := \int_0^t \Psi_s'(W_s)^2 \, ds.
\]

Moreover, assuming the evolution of $W_t$ is sufficiently smooth, the driving function $\widetilde{W}_t$ evolves according to
\begin{equation}
\dot{\widetilde{W}}_t = \partial_t \Psi_t(W_t) + \Psi_t'(W_t) \dot{W}_t = -3 \Psi_t''(W_t) + \Psi_t'(W_t) b(W_t),
\end{equation}
where the identity $\partial_t \Psi_t(W_t) = -3 \Psi_t''(W_t)$ follows from Equation (4.35) in \cite{Lawler:book}.
\end{thm}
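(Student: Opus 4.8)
The plan is to treat the family of conformal maps $\Psi_t := \widetilde{g}_t \circ \Psi \circ g_t^{-1}$ as the central object and to extract all four assertions from a single evolution equation for it. First I would record the structural facts. Since $g_t^{-1}(W_t) = \gamma(t)$ and $\Psi(\gamma(t)) = \widetilde{\gamma}(t)$ is the tip of the image hull, we get $\Psi_t(W_t) = \widetilde{g}_t(\widetilde{\gamma}(t)) = \widetilde{W}_t$ directly from the definition, which is the claimed formula for the driving function. The map $\Psi_t$ is conformal on a neighborhood of $W_t$ in $\overline{\mathbb{H}}$; because $\Psi$ sends $\partial\mathcal{N}\cap\mathbb{R}$ into $\mathbb{R}$ and the normalized maps $g_t,\widetilde{g}_t$ send boundary real segments to $\mathbb{R}$, Schwarz reflection shows $\Psi_t$ continues analytically across the real axis, so $\Psi_t'(W_t)$ and $\Psi_t''(W_t)$ are well defined and real. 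Finally, since $\widetilde{\gamma}[0,t]$ is an increasing family of hulls generated by the continuous curve $\widetilde{\gamma}$ carrying the time parametrization inherited from $t$, general Loewner theory gives $\partial_t \widetilde{g}_t(z) = 2\dot{\sigma}(t)/(\widetilde{g}_t(z)-\widetilde{W}_t)$, where $2\sigma(t) = \operatorname{hcap}(\widetilde{\gamma}[0,t])$, once $\sigma$ is known to be $C^1$.

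Next I would derive the master ODE for $\Psi_t$. Fixing $w$ and writing $z(t) = g_t^{-1}(w)$, differentiation of $\Psi_t(w) = \widetilde{g}_t(\Psi(z(t)))$ in $t$ gives $\partial_t\Psi_t(w) = (\partial_t\widetilde{g}_t)(\Psi(z(t))) + \widetilde{g}_t'(\Psi(z(t)))\,\Psi'(z(t))\,\dot{z}(t)$. Using $\dot{z}(t) = -2/\big(g_t'(z(t))(w-W_t)\big)$ (from differentiating $g_t(g_t^{-1}(w))=w$) together with the chain-rule identity $\widetilde{g}_t'(\Psi(z))\,\Psi'(z) = \Psi_t'(w)\,g_t'(z)$ (obtained by differentiating the definition of $\Psi_t$), the second term collapses to $-2\Psi_t'(w)/(w-W_t)$, and substituting the posited Loewner equation for $\widetilde{g}_t$ into the first term yields
\[
\partial_t\Psi_t(w) = \frac{2\dot{\sigma}(t)}{\Psi_t(w)-\Psi_t(W_t)} - \frac{2\Psi_t'(w)}{w-W_t}.
\]

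The remaining work is local analysis at $w=W_t$. The left side is analytic there, so the residue of the right side must vanish. Expanding both terms in $\xi = w - W_t$ using the Taylor series of $\Psi_t$ about $W_t$, the $\xi^{-1}$ coefficient is $2\dot{\sigma}(t)/\Psi_t'(W_t) - 2\Psi_t'(W_t)$; setting it to zero forces $\dot{\sigma}(t) = \Psi_t'(W_t)^2$. This simultaneously identifies the speed in the Loewner equation for $\widetilde{g}_t$ (the first displayed claim) and, upon integration, gives $\operatorname{hcap}(\widetilde{\gamma}[0,t]) = 2\int_0^t \Psi_s'(W_s)^2\,ds$ (the capacity claim). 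Collecting the $\xi^0$ terms of the same expansion then yields $(\partial_t\Psi_t)(W_t) = -3\Psi_t''(W_t)$, which is the identity cited as Equation (4.35) of \cite{Lawler:book}. To finish, I would differentiate $\widetilde{W}_t = \Psi_t(W_t)$ by the total time derivative, $\dot{\widetilde{W}}_t = (\partial_t\Psi_t)(W_t) + \Psi_t'(W_t)\dot{W}_t$, and substitute the two facts just proved to obtain $\dot{\widetilde{W}}_t = -3\Psi_t''(W_t) + \Psi_t'(W_t)b(W_t)$.

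The main obstacle is analytic rather than algebraic: justifying the regularity that makes the formal computation rigorous. One must show that $\sigma(t) = \tfrac12\operatorname{hcap}(\widetilde{\gamma}[0,t])$ is genuinely $C^1$ (so that $\widetilde{g}_t$ obeys a Loewner ODE with a continuous speed) and that the family $(t,w)\mapsto\Psi_t(w)$ is jointly smooth enough both to differentiate under the composition and to interchange the time derivative with the Taylor expansion in $w$. These are exactly the points covered by the hypothesis that the evolution of $W_t$ is sufficiently smooth; with that in hand, the analytic continuation across $\mathbb{R}$ guarantees that all the derivatives $\Psi_t'(W_t)$ and $\Psi_t''(W_t)$ appearing above are legitimate and the residue and constant-term extractions are valid.
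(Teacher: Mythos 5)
Your derivation is correct and is essentially the argument the paper defers to: the paper's proof is simply a citation to Section 4.6.1 of Lawler's book, and your computation (the evolution equation for $\Psi_t$, the vanishing-residue argument giving $\dot\sigma = \Psi_t'(W_t)^2$, and the constant-term extraction giving $(\partial_t\Psi_t)(W_t) = -3\Psi_t''(W_t)$) is precisely the content of that section. No substantive difference to report.
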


\begin{proof}
See Section~4.6.1 in~\cite{Lawler:book}.
\end{proof}

\begin{thm}[Stochastic Loewner chain under coordinate change]
\label{random coordinate change}

Assume the setting of Theorem~\ref{Loewner coordinate change in H}, and suppose the driving function \( W_t \in \mathbb{R} \) evolves according to the stochastic differential equation
\begin{equation}
dW_t = \sqrt{\kappa} \, dB_t + b\left(W_t; \Psi_t(W_1), \ldots, \Psi_t(W_n)\right)\, dt,
\end{equation}
where \( B_t \) is standard Brownian motion, and \( b \) is a drift term depending on the conformal images of marked points under the time-dependent conformal map
\[
\Psi_t := \widetilde{g}_t \circ \Psi \circ g_t^{-1},
\]
with \( \Psi \) conformal near \( W_0 \in \mathbb{R} \) and \( g_t \), \( \widetilde{g}_t \) the Loewner flows before and after coordinate change, respectively.

Define the transformed driving function \( \widetilde{W}_t := \Psi_t(W_t) \), and introduce the reparameterized time
\[
s(t) := \int_0^t |\Psi_u'(W_u)|^2 \, du.
\]
Then the time-changed process \( \widetilde{W}_s := \widetilde{W}_{t(s)} \) satisfies the stochastic differential equation
\begin{equation}
\label{chordal-coordinate-changed-sde}
d\widetilde{W}_s = \sqrt{\kappa} \, dB_s 
+ \frac{b\left(W_s; \Psi_{t(s)}(W_1), \ldots, \Psi_{t(s)}(W_n)\right)}{\Psi_{t(s)}'(W_s)} \, ds 
+ \frac{\kappa - 6}{2} \cdot \frac{\Psi_{t(s)}''(W_s)}{[\Psi_{t(s)}'(W_s)]^2} \, ds.
\end{equation}
\end{thm}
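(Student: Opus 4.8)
The plan is to derive the transformed SDE by applying the time-dependent It\^o formula to $\widetilde{W}_t = \Psi_t(W_t)$, and then to normalize the resulting spatially varying diffusion coefficient by the Dambis--Dubins--Schwarz time change $s(t)=\int_0^t |\Psi_u'(W_u)|^2\,du$. The only genuinely new ingredient beyond the deterministic Theorem~\ref{Loewner coordinate change in H} is the second-order It\^o correction: it is precisely what converts the deterministic transformation law $\partial_t\Psi_t(W_t)=-3\Psi_t''(W_t)$ into its stochastic counterpart, and tracking this single term is the heart of the argument.

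First I would record the regularity of the time-dependent map $\Psi_t$. Writing $F(t,w):=\Psi_t(w)$, the map is analytic (hence $C^\infty$) in the spatial variable $w$ since $\Psi_t$ is conformal, and differentiating the defining identity $\Psi_t\circ g_t=\widetilde{g}_t\circ\Psi$ in $t$ at a fixed source point yields the pathwise relation
\[
\partial_t\Psi_t(w)=\frac{2\,\Psi_t'(W_t)^2}{\Psi_t(w)-\widetilde{W}_t}-\frac{2\,\Psi_t'(w)}{w-W_t}.
\]
This holds for every realization of the driving path, because it uses only the Loewner ODEs for $g_t$ and $\widetilde{g}_t$ and not any differentiability of $W_t$. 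Letting $w\to W_t$ and Taylor-expanding $\Psi_t$ to second order gives $\partial_t\Psi_t(W_t)=-3\Psi_t''(W_t)$, the identity already quoted in Theorem~\ref{Loewner coordinate change in H}; I would emphasize that this is exactly the quantity occupying the $\partial_t$-slot of It\^o's formula, now evaluated at the random argument $W_t$.

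Next I would apply the It\^o formula for the time-dependent function $F(t,w)=\Psi_t(w)$ to the semimartingale $W_t$, using $dW_t=\sqrt{\kappa}\,dB_t+b\,dt$ and $d\langle W\rangle_t=\kappa\,dt$:
\[
d\widetilde{W}_t=\partial_t\Psi_t(W_t)\,dt+\Psi_t'(W_t)\,dW_t+\tfrac12\Psi_t''(W_t)\,d\langle W\rangle_t.
\]
Substituting $\partial_t\Psi_t(W_t)=-3\Psi_t''(W_t)$ and collecting the drift, the deterministic term $-3\Psi_t''$ combines with the It\^o correction $\tfrac{\kappa}{2}\Psi_t''$ to produce the coefficient $\tfrac{\kappa-6}{2}\Psi_t''$, which is the crux of the statement. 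One obtains
\[
d\widetilde{W}_t=\sqrt{\kappa}\,\Psi_t'(W_t)\,dB_t+\Big[\Psi_t'(W_t)\,b+\tfrac{\kappa-6}{2}\Psi_t''(W_t)\Big]\,dt.
\]

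Finally I would carry out the time change. The martingale part $M_t=\sqrt{\kappa}\int_0^t\Psi_u'(W_u)\,dB_u$ has quadratic variation $\langle M\rangle_t=\kappa\int_0^t|\Psi_u'(W_u)|^2\,du=\kappa\,s(t)$; since $s$ is continuous and strictly increasing (the conformal derivative never vanishes), Dambis--Dubins--Schwarz gives $M_{t(s)}=\sqrt{\kappa}\,B_s$ for a standard Brownian motion in the $s$-clock. Writing $dt=ds/\Psi_{t(s)}'(W_s)^2$ in the drift then divides the two terms by $\Psi_{t(s)}'(W_s)$ and $\Psi_{t(s)}'(W_s)^2$ respectively, yielding exactly \eqref{chordal-coordinate-changed-sde}. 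I expect the main obstacle to be the careful justification of the time-dependent It\^o formula: specifically, verifying that $(t,w)\mapsto\Psi_t(w)$ has the joint regularity ($C^1$ in $t$, $C^2$ in $w$, with derivatives continuous along the path $W_t$) needed to legitimize the expansion, and confirming that the pathwise formula for $\partial_t\Psi_t$ may be evaluated at the random argument. Both follow from the smoothness of the Loewner flow, but each deserves a sentence of care.
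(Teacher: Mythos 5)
Your proposal is correct and follows essentially the same route as the paper's proof: Itô's formula for the time-dependent map $\Psi_t$, substitution of the identity $\partial_t\Psi_t(W_t)=-3\Psi_t''(W_t)$ to combine the deterministic term with the Itô correction into $\tfrac{\kappa-6}{2}\Psi_t''$, and then the capacity time change to normalize the diffusion coefficient. Your handling of the time change (converting $dt$ to $ds/\Psi_{t(s)}'(W_s)^2$ in the drift while dividing the martingale part by $\Psi'$) is in fact slightly more careful than the paper's phrasing.
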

\begin{proof}
We apply Itô's formula to the process \( \widetilde{W}_t = \Psi_t(W_t) \), where both \( \Psi_t \) and \( W_t \) are time-dependent. Using the chain rule for semimartingales, we have:
\begin{align*}
d\widetilde{W}_t 
&= (\partial_t \Psi_t)(W_t)\, dt + \Psi_t'(W_t)\, dW_t + \frac{1}{2} \Psi_t''(W_t)\, d\langle W \rangle_t.
\end{align*}
Since \( dW_t = \sqrt{\kappa}\, dB_t + b(W_t; \Psi_t(W_1), \ldots, \Psi_t(W_n))\, dt \), the quadratic variation is \( d\langle W \rangle_t = \kappa\, dt \). Substituting, we get:
\begin{align*}
d\widetilde{W}_t 
&= (\partial_t \Psi_t)(W_t)\, dt 
+ \Psi_t'(W_t) \left[ \sqrt{\kappa}\, dB_t + b(W_t; \Psi_t(W_1), \ldots, \Psi_t(W_n))\, dt \right] 
+ \frac{\kappa}{2} \Psi_t''(W_t)\, dt \\
&= \Psi_t'(W_t)\, \sqrt{\kappa}\, dB_t 
+ \Psi_t'(W_t)\, b(W_t; \Psi_t(W_1), \ldots, \Psi_t(W_n))\, dt 
+ (\partial_t \Psi_t)(W_t)\, dt 
+ \frac{\kappa}{2} \Psi_t''(W_t)\, dt.
\end{align*}

According to Equation (4.35) in \cite{Lawler:book}, we have
\[
(\partial_t \Psi_t)(W_t) = -3 \Psi_t''(W_t).
\]
Substituting this identity, we obtain:
\begin{align*}
d\widetilde{W}_t 
&= \Psi_t'(W_t)\, \sqrt{\kappa}\, dB_t 
+ \Psi_t'(W_t)\, b(W_t; \Psi_t(W_1), \ldots, \Psi_t(W_n))\, dt 
+ \left( \frac{\kappa}{2} - 3 \right) \Psi_t''(W_t)\, dt \\
&= \Psi_t'(W_t)\, \sqrt{\kappa}\, dB_t 
+ \Psi_t'(W_t)\, b(W_t; \Psi_t(W_1), \ldots, \Psi_t(W_n))\, dt 
+ \frac{\kappa - 6}{2} \Psi_t''(W_t)\, dt,
\end{align*}
which completes the derivation of the SDE for \( \widetilde{W}_t \).

To reparameterize time, define
\[
s(t) := \int_0^t |\Psi_u'(W_u)|^2 \, du.
\]
Let \( \widetilde{W}_s := \widetilde{W}_{t(s)} \). By standard time-change theory for semimartingales (e.g., see Revuz–Yor), the transformed Brownian motion is
\[
B_s := \int_0^{t(s)} \Psi_u'(W_u)\, dB_u,
\]
which is again a standard Brownian motion with respect to the time-changed filtration. Dividing all drift and diffusion terms in \( d\widetilde{W}_t \) by \( |\Psi_t'(W_t)| \), we obtain the SDE for \( \widetilde{W}_s \):
\begin{align*}
d\widetilde{W}_s 
&= \sqrt{\kappa}\, dB_s 
+ \frac{b(W_s; \Psi_{t(s)}(W_1), \ldots, \Psi_{t(s)}(W_n))}{\Psi_{t(s)}'(W_s)}\, ds 
+ \frac{\kappa - 6}{2} \cdot \frac{\Psi_{t(s)}''(W_s)}{[\Psi_{t(s)}'(W_s)]^2}\, ds.
\end{align*}
This completes the proof.
\end{proof}

\begin{remark}[Drift term as a pre-Schwarzian form]\label{drift term pre schwarz form}
As a consequence of Theorem~\ref{random coordinate change}, under a conformal change of coordinates \( \tau \), the drift term in the marginal law transforms as a pre-Schwarzian form. That is, if the original drift is \( b \), then the transformed drift \( \widetilde{b} \) satisfies
\[
b = \tau' \cdot \widetilde{b} \circ \tau + \frac{6 - \kappa}{2} \left( \log \tau' \right)'.
\]
\end{remark}

\begin{cor}\label{gamma 1 gamma2 capacity change}
Let $\gamma$, $\tilde{\gamma}$ be two hulls starting at $x \in \partial \mathbb{H}$ and $y \in \partial \mathbb{H}$ with capacity $\epsilon$ and $c \epsilon$ , let $g_{\epsilon}$ be the normalized map removing $\gamma$ and $\tilde{\epsilon}= \operatorname{hcap}(g_{\epsilon}\circ \gamma(t))$, then we have:

\begin{equation}
\tilde{\varepsilon}=
c \varepsilon\left(1-\frac{4\varepsilon}{(x-y)^2}\right)+o\left(\varepsilon^2\right)
\end{equation}

\end{cor}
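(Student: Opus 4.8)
The plan is to read off $\tilde\epsilon$ as the capacity of the image hull $g_\epsilon(\tilde\gamma)$ directly from the capacity--reparametrization formula recorded in Theorem~\ref{Loewner coordinate change in H}, and then to expand that formula to second order in $\epsilon$. I apply the theorem with the fixed conformal map $\Psi=g_\epsilon$ (which removes $\gamma$) and with the ``original'' curve taken to be $\tilde\gamma$, parametrized by its own capacity. Writing $W_s$ for the driving function of $\tilde\gamma$ and $\Psi_s=\widetilde g_s\circ g_\epsilon\circ g_s^{-1}$ for the conjugated maps, the theorem gives
\[
\tilde\epsilon \;=\; \sigma(c\epsilon),\qquad \sigma(t)=\int_0^t \Psi_s'(W_s)^2\,ds,
\]
in the normalization $g_\epsilon(z)=z+2\epsilon/(z-x)+O(\epsilon^2)$ in which ``capacity'' equals Loewner time. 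Thus the whole problem reduces to expanding the integral $\sigma(c\epsilon)$, and the leading contribution is simply the value of the integrand at $s=0$, namely $\Psi_0'(W_0)^2=g_\epsilon'(y)^2$.

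For a hull of capacity $\epsilon$ based at $x$ the normalized map has the \emph{shape--independent} first--order expansion $g_\epsilon'(z)=1-2\epsilon/(z-x)^2+O(\epsilon^2)$, uniformly for $z$ in a fixed neighbourhood of $y$, so that
\[
g_\epsilon'(y)^2 \;=\; 1-\frac{4\epsilon}{(x-y)^2}+O(\epsilon^2).
\]
Here the shape--dependent $O(\epsilon^2)$ term is harmless: in $\sigma(c\epsilon)$ it is multiplied only by the interval length $c\epsilon$, so it contributes at order $\epsilon^3$. Freezing the integrand at its $s=0$ value therefore yields the candidate
\[
\sigma(c\epsilon)\;\approx\; c\epsilon\,g_\epsilon'(y)^2 \;=\; c\epsilon\Bigl(1-\frac{4\epsilon}{(x-y)^2}\Bigr),
\]
which is exactly the claimed expression.

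The step I expect to be the main obstacle is controlling the error made by freezing the integrand, i.e.\ showing
\[
\int_0^{c\epsilon}\bigl[\Psi_s'(W_s)^2-g_\epsilon'(y)^2\bigr]\,ds = O(\epsilon^3)=o(\epsilon^2).
\]
The essential point is that the integrand deviation \emph{vanishes at $s=0$} and grows only slowly: differentiating the conjugated Loewner flow $\partial_s\Psi_s$ (as in Theorem~\ref{Loewner coordinate change in H} and \cite{Lawler:book}) expresses $\tfrac{d}{ds}\Psi_s'(W_s)$ as a combination of the terms $\Psi_s''(W_s)^2/\Psi_s'(W_s)$, $\Psi_s'''(W_s)$ and $\Psi_s''(W_s)\dot W_s$. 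Since $\Psi_s$ differs from $g_\epsilon$ only by the removal of hulls of capacity at most $c\epsilon$, one has $\Psi_s'(W_s)=1+O(\epsilon)$ and $\Psi_s''(W_s),\Psi_s'''(W_s)=O(\epsilon)$, whence $\tfrac{d}{ds}\Psi_s'(W_s)=O(\epsilon)$ provided the driving velocity $\dot W_s$ of $\tilde\gamma$ is bounded. Consequently $\Psi_s'(W_s)^2-g_\epsilon'(y)^2=O(\epsilon s)$ on $[0,c\epsilon]$, and integrating gives $O(\epsilon^3)$. The genuinely delicate feature is that $\Psi_s'$ is evaluated at the moving tip $W_s$, where the individual factors $\widetilde g_s$ and $g_s^{-1}$ are singular; one must exploit that $\Psi_s$ is a single conformal map across $\mathbb{R}$ near $W_s$ so that these singularities cancel, which is precisely what makes the local expansion of $\partial_s\Psi_s$ at $z=W_s$ finite. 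The boundedness of $\dot W_s$ is not restrictive for the statement: since the $O(\epsilon^2)$ answer is shape--independent, it suffices to verify it for $\tilde\gamma$ a vertical slit, where $W_s\equiv y$ is constant and the estimate is immediate; a consistency check is provided by the exact additivity $\operatorname{hcap}(\gamma\cup\tilde\gamma)=\operatorname{hcap}(\gamma)+\operatorname{hcap}(g_\gamma(\tilde\gamma))$, which reproduces the same leading relation symmetrically in the two hulls.
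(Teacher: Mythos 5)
Your proposal is correct and follows essentially the same route as the paper: both reduce the claim to $\tilde{\varepsilon}=c\varepsilon\,g_{\varepsilon}'(y)^2+o(\varepsilon^2)$ via the capacity-scaling/reparametrization rule and then expand $g_{\varepsilon}'(y)$ to first order using the Loewner derivative flow $\partial_t h_t'(w)=-2h_t'(w)/(h_t(w)-x_t)^2$. Your version is just more careful about the error from freezing the integrand in $\sigma(c\varepsilon)$, which the paper's two-line proof absorbs into the $o(\varepsilon)$ inside $c\varepsilon(h_{\varepsilon}'(y)^2+o(\varepsilon))$ without comment.
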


\begin{proof}
From the Loewner equation, $\partial_t h_t^{\prime}(w)=- \frac{2h_t^{\prime}(w)}{\left(h_t(w)-x_t)\right)^2}$, which implies $h_{\varepsilon}^{\prime}(y)=1-\frac{4\varepsilon}{(x-y)^2}+o(\varepsilon)$. By conformal transformation $h_{\epsilon}(y)$, we get:
$$
\tilde{\varepsilon}=c\epsilon( h'_{\epsilon}(y)^2+ o(\epsilon)) =c \varepsilon\left(1-\frac{4\varepsilon}{(x-y)^2}\right)+o\left(\varepsilon^2\right)
$$

\end{proof}

\subsection{Local commutation relation and null vector equations in $\kappa>0$ case} \label{reparametrization symmetry}

In this section, we explore how the commutation relations (reparametrization symmetry) and conformal invariance impose constraints on the drift terms $b_j(\boldsymbol{z},u)$ in the marginal law for the multiple chordal SLE($\kappa$) system.

The pioneering work on commutation relations was done in \cite{Dub07}. The author studied the commutation relations for multiple SLEs in the upper half plane $\mathbb{H}$ with
$n$ growth points $z_1,z_2,\ldots,z_n \in \mathbb{R}$ and $m$ additional marked points $u_1,u_2,\ldots,u_m \in \mathbb{R}$. We cite his results here.

A significant difference for the general type multiple chordal SLE($\kappa$) system arises when we study their conformal invariance properties. Although the general multiple chordal SLE($\kappa$) systems are conformally invariant, the partition functions in their corresponding equivalence classes do not necessarily exhibit conformal covariance.
However, it is still possible to find at least one conformally covariant partition function within each equivalence class.

The domain Markov property of local multiple SLE($\kappa$) system implies that we can first map out $\gamma_{\left[0, t_i\right]}^{(i)}$ using $g_{t_i}^{(i)}$, then mapping out $g_{t_i}^{(i)}\left(\gamma_{\left[0, t_j\right]}^{(j)}\right)$, or vice versa. The image has the same law regardless of the order in which we map out the curves. This is also known as the commutation relations or reparametrization symmetry, which implies the existence of a partition function.

\begin{thm}[Commutation relations for $u=\infty$] \label{commutation relation for chordal with marked point u infty}
In the upper half plane $\mathbb{H}$, $n$ chordal SLEs start at $x_1,x_2,\ldots,x_n\in \partial{\mathbb{H}}$ with a marked boundary point $u$. 
\begin{itemize}

\item[(i)] Let the infinitesimal diffusion generators be
\begin{equation}
\mathcal{M}_i=\frac{\kappa}{2} \partial_{ii}+b_i(x_1, x_2,\ldots, x_n,u) \partial_i +\sum_{j \neq i} \frac{2}{x_j-x_i}\partial_j 
\end{equation}

where $\partial_i=\partial_{x_i}$. If n SLEs locally commute, then the associated infinitesimal generators satisfy:
\begin{equation}\label{commutation relation of generators}
[\mathcal{M}_i, \mathcal{M}_j]=\frac{4}{(x_i-x_j)^2}(\mathcal{M}_j-\mathcal{M}_i)
\end{equation}
There exists a partition function $\psi(\boldsymbol{x})$ such that the drift term $b_i(\boldsymbol{x})$ is given by:
\begin{equation}
b_i(\boldsymbol{x})= \kappa \partial_i \log \psi
\end{equation}
where $\psi$ satisfies the null vector equations with an undetermined function $h_i(x)$.
\begin{equation}\label{null vector equations in mathbb H infty}
\frac{\kappa}{2} \partial_{ii} \psi+\sum_{j \neq i} \frac{2}{x_j-x_i}\partial_i \psi+ \left[\left(1-\frac{6}{\kappa}\right) \sum_{j \neq i}\frac{1}{(x_j-x_i)^2}+h_i(x_i)\right]\psi=0
\end{equation}

\item[(ii)]
 By analyzing the asymptotic behavior of two adjacent growth points $x_i$ and $x_{i+1}$ ( with no marked points between $x_i$ and $x_{i+1}$ on the real line $\mathbb{R}$), we can further show that the $h_i(x)=h_{i+1}(x)$ in null vector equations (\ref{null vector equations in mathbb H}). As a corollary, if all growth points lie consecutively on the real line $\mathbb{R}$ with no marked points between them, then there exists a common function $h(x)$ such that $h(x)=h_1(x)=\ldots=h_n(x)$.
\end{itemize}
\end{thm}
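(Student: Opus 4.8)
I would follow the strategy of \cite{Dub07}: compute the second‑order operator $[\mathcal{M}_i,\mathcal{M}_j]$ explicitly and match it term by term against $\frac{4}{(x_i-x_j)^2}(\mathcal{M}_j-\mathcal{M}_i)$. Matching the coefficients of the first‑order part yields the closedness relation $\partial_i b_j=\partial_j b_i$, which by the Poincaré lemma on the simply connected chamber $\mathfrak{X}^n$ produces a positive potential $\psi$ with $b_i=\kappa\,\partial_i\log\psi$; matching the zeroth‑order part then forces $\psi$ to solve the null vector equation, and the only freedom left compatible with the commutator identity is an additive term depending on $x_i$ alone, which we name $h_i(x_i)$. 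This bookkeeping is routine once the commutator is expanded, so I would record it as a computation and concentrate on the genuinely new assertion, Part (ii).

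\textbf{Part (ii).} Fix adjacent points $x_i<x_{i+1}$ with no marked point between them, and pass to fusion coordinates $x=\tfrac12(x_i+x_{i+1})$, $r=x_{i+1}-x_i$, so that $\partial_i=\tfrac12\partial_x-\partial_r$ and $\partial_{i+1}=\tfrac12\partial_x+\partial_r$. Writing $\mathcal{D}_i=\frac{\kappa}{2}\partial_{ii}+\sum_{j\neq i}\frac{2}{x_j-x_i}\partial_j+\bigl(1-\tfrac{6}{\kappa}\bigr)\sum_{j\neq i}\frac{1}{(x_j-x_i)^2}+h_i(x_i)$ for the null vector operator of index $i$, I would note that both $\mathcal{D}_i$ and $\mathcal{D}_{i+1}$ share the same radial part
\[
\mathcal{R}=\frac{\kappa}{2}\partial_{rr}+\frac{2}{r}\partial_r+\Bigl(1-\frac{6}{\kappa}\Bigr)\frac{1}{r^{2}},\qquad \mathcal{R}\,r^{\beta}=I(\beta)\,r^{\beta-2},\qquad I(\beta)=\frac{\kappa}{2}\beta(\beta-1)+2\beta+\Bigl(1-\frac{6}{\kappa}\Bigr),
\]
whose roots are $\alpha_+=\frac{2}{\kappa}$ and $\alpha_-=1-\frac{6}{\kappa}$; all remaining terms of $\mathcal{D}_i$ and $\mathcal{D}_{i+1}$ agree after the reflection $r\mapsto-r$ precisely because no marked point lies between the two growth points. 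I would then invoke a Frobenius expansion of the positive solution at the regular singular locus $r=0$, namely $\psi=r^{\alpha}\bigl(\psi_0+r\,\psi_1+r^{2}\psi_2+\cdots\bigr)$ with $\alpha\in\{\alpha_+,\alpha_-\}$ the leading exponent and $\psi_k=\psi_k\bigl(x,(x_j)_{j\neq i,i+1}\bigr)$, $\psi_0\not\equiv0$.

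The identity $h_i=h_{i+1}$ is then extracted order by order. At order $r^{\alpha-2}$ both equations reduce to $I(\alpha)=0$, fixing $\alpha$. At order $r^{\alpha-1}$ they read
\[
I(\alpha+1)\psi_1=\Bigl(\tfrac{\kappa\alpha}{2}-1\Bigr)\partial_x\psi_0 \quad\text{(from }\mathcal{D}_i),\qquad I(\alpha+1)\psi_1=-\Bigl(\tfrac{\kappa\alpha}{2}-1\Bigr)\partial_x\psi_0 \quad\text{(from }\mathcal{D}_{i+1}),
\]
and \emph{adding} them gives $2I(\alpha+1)\psi_1=0$; since $I(\alpha_++1)=4$ and $I(\alpha_-+1)=\kappa-4$ are nonzero for generic $\kappa$, this forces $\psi_1\equiv0$. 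Finally, \emph{subtracting} the two equations at order $r^{\alpha}$ cancels every term except
\[
\bigl[\kappa(\alpha+1)-2\bigr]\partial_x\psi_1+\bigl(h_{i+1}(x)-h_i(x)\bigr)\psi_0=0,
\]
so with $\psi_1\equiv0$ and $\psi_0\not\equiv0$ we obtain $h_i(x)=h_{i+1}(x)$ for every admissible $x$; as $h_i$ and $h_{i+1}$ each depend on a single argument by Part (i), this is the equality of functions $h_i=h_{i+1}$. The corollary follows by iterating along the chain $x_1<\cdots<x_n$: if no marked point separates any neighbouring pair, the adjacent identity propagates to a common $h=h_1=\cdots=h_n$.

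\textbf{Main obstacle.} The algebra above is clean; the real difficulty is analytic, namely justifying the Frobenius expansion rigorously — that the positive solution $\psi$ genuinely admits the series $r^{\alpha}\sum_k r^k\psi_k$ with coefficients smooth in the transverse variables, that $\psi_0\not\equiv0$, and that term‑by‑term differentiation is legitimate. The delicate sub‑case is the resonance $\alpha_+-\alpha_-=\frac{8}{\kappa}-1\in\mathbb{Z}$ (equivalently $8/\kappa\in\mathbb{Z}$, e.g. $\kappa=4,\tfrac{8}{3},2,\dots$, together with the degenerate double root at $\kappa=8$), where $I(\alpha+1)$ can vanish and logarithmic corrections appear. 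I would handle these exceptional values either by a separate Frobenius‑with‑logarithm analysis or by establishing the identity for generic $\kappa$ and invoking continuity of $h_i$ in $\kappa$.
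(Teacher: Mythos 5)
The paper offers no proof of its own here --- it defers entirely to \cite{Dub07} --- and your reconstruction follows exactly that route: the commutator expansion and integrability argument for (i), and the fusion asymptotics as $x_{i+1}-x_i\to 0$ for (ii) that the statement itself advertises; your exponent arithmetic checks out (the indicial roots are indeed $\alpha_+=\tfrac{2}{\kappa}$, $\alpha_-=1-\tfrac{6}{\kappa}$, and $I(\alpha_++1)=4$, $I(\alpha_-+1)=\kappa-4$). The only substantive gaps are the ones you flag yourself --- justifying the Frobenius expansion of $\psi$ with $\psi_0\not\equiv 0$ and handling the resonant values of $\kappa$, together with the asserted-but-not-derived claim in (i) that the integration term $h_i$ depends on $x_i$ alone (which requires running the pairwise commutation identity over all $j\neq i$) --- and these are precisely the points that the cited reference supplies in detail.
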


The discussion on commutation relations above extend to arbitrary \( u \in \mathbb{H} \), as described in the following theorem:

\begin{thm}[Commutation relations for $u\in \mathbb{H}$] \label{commutation relation for chordal with marked point u in H}
In the upper half plane $\mathbb{H}$, $n$ chordal SLEs start at $x_1,x_2,\ldots,x_n\in \partial{\mathbb{H}}$ with a marked boundary point $u$. 
\begin{itemize}

\item[(i)] Let the infinitesimal diffusion generators be
\begin{equation}
\mathcal{M}_i=\frac{\kappa}{2} \partial_{ii}+b_i(x_1, x_2,\ldots, x_n,u) \partial_i +\sum_{j \neq i} \frac{2}{x_j-x_i}\partial_j +\frac{2}{u-x_i} \partial_u
\end{equation}

where $\partial_i=\partial_{x_i}$. If n SLEs locally commute, then the associated infinitesimal generators satisfy:
\begin{equation}
[\mathcal{M}_i, \mathcal{M}_j]=\frac{4}{(x_i-x_j)^2}(\mathcal{M}_j-\mathcal{M}_i)
\end{equation}
There exists a partition function $\psi(\boldsymbol{x},u)$ such that the drift term $b_i(\boldsymbol{x},u)$ is given by:
\begin{equation}
b_i(\boldsymbol{x},u)= \kappa \partial_i \log \psi
\end{equation}
where $\psi$ satisfies the null vector equations with an undetermined function $h_i(x,u)$.
\begin{equation}\label{null vector equations in mathbb H}
\frac{\kappa}{2} \partial_{ii} \psi+\sum_{j \neq i} \frac{2}{x_j-x_i}\partial_i \psi+\frac{2}{u-x_i} \partial_u \psi + \left[\left(1-\frac{6}{\kappa}\right) \sum_{j \neq i}\frac{1}{(x_j-x_i)^2}+h_i(x_i,u)\right]\psi=0
\end{equation}

\item[(ii)]
 By analyzing the asymptotic behavior of two adjacent growth points $x_i$ and $x_{i+1}$ ( with no marked points between $x_i$ and $x_{i+1}$ on the real line $\mathbb{R}$), we can further show that the $h_i(x,u)=h_{i+1}(x,u)$ in null vector equations (\ref{null vector equations in mathbb H}). As a corollary, if all growth points lie consecutively on the real line $\mathbb{R}$ with no marked points between them, then there exists a common function $h(x,u)$ such that $h(x,u)=h_1(x,u)=\ldots=h_n(x,u)$.

\end{itemize}
\end{thm}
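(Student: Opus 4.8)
The plan is to follow the strategy behind the already-established case $u=\infty$ (Theorem~\ref{commutation relation for chordal with marked point u infty}), carrying the extra Loewner-flow term $\frac{2}{u-x_i}\partial_u$ for the interior marked point through every step, and checking that it never disturbs the parts of the argument that produced the partition function. First I would derive the bracket relation from local commutation. Grow the $i$-th curve by an infinitesimal capacity $\epsilon$ and then the $j$-th curve by capacity $\delta$, and compare with the opposite order; by the domain Markov property the two joint laws coincide, while Theorem~\ref{random coordinate change} and Remark~\ref{drift term pre schwarz form} guarantee that the driving functions transform with exactly the pre-Schwarzian correction already built into $\mathcal{M}_i$. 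The only mismatch between the naive second-order expansions $\mathcal{M}_i\mathcal{M}_j$ and $\mathcal{M}_j\mathcal{M}_i$ arises from the reparametrization of the capacity of one hull after the other has been removed, and by Corollary~\ref{gamma 1 gamma2 capacity change} this reparametrization carries precisely the factor $1-\frac{4\epsilon}{(x_i-x_j)^2}$. Collecting the order-$\epsilon\delta$ terms then yields $[\mathcal{M}_i,\mathcal{M}_j]=\frac{4}{(x_i-x_j)^2}(\mathcal{M}_j-\mathcal{M}_i)$, with the coefficient traced directly to that capacity factor.

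Next I would extract the partition function by expanding the bracket and matching differential order. Writing $\mathcal{M}_i=\frac{\kappa}{2}\partial_i^2+D_i$ with $D_i=b_i\partial_i+\sum_{k\ne i}\frac{2}{x_k-x_i}\partial_k+\frac{2}{u-x_i}\partial_u$, the second-order part of $[\mathcal{M}_i,\mathcal{M}_j]$ comes from $[\frac{\kappa}{2}\partial_i^2,D_j]-[\frac{\kappa}{2}\partial_j^2,D_i]$. A short computation gives its $\partial_j^2-\partial_i^2$ piece equal to $\frac{2\kappa}{(x_i-x_j)^2}(\partial_j^2-\partial_i^2)$, which matches the right-hand side automatically, together with a residual term $\kappa(\partial_i b_j-\partial_j b_i)\partial_i\partial_j$. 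Since the right-hand side carries no $\partial_i\partial_j$, this forces $\partial_i b_j=\partial_j b_i$. Crucially, because $\frac{2}{u-x_j}$ is independent of $x_i$, the interior $u$-flow term contributes nothing at second order, so the integrability condition is identical to the $u=\infty$ case. On the simply connected chamber $\mathfrak{X}^n(\boldsymbol{x},u)$, the closed $1$-form $\sum_i b_i\,dx_i$ (for each fixed $u$) is exact, producing a positive potential $\psi(\boldsymbol{x},u)$ with $b_i=\kappa\,\partial_i\log\psi$. Defining $h_i:=-\mathcal{D}_i\psi/\psi$, where $\mathcal{D}_i$ is the null-vector operator, then repackages the identity as \eqref{null vector equations in mathbb H}, now containing the extra regular advection term $\frac{2}{u-x_i}\partial_u\psi$.

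Matching the first-order part of the bracket after substituting $b_i=\kappa\partial_i\log\psi$ supplies the genuine constraint on the a priori arbitrary functions $h_i$: I would show that the $\partial_k$-components for growth points $k\ne i$ force $\partial_k h_i=0$, so that $h_i$ depends only on its own coordinate $x_i$ and on the marked point $u$, completing part~(i). For part~(ii) I would perform the fusion analysis of two adjacent points $x_i,x_{i+1}$ with no marked point between them: set $\eta=x_{i+1}-x_i$ and examine the null-vector equations for indices $i$ and $i+1$ as $\eta\to0$. Because $u\in\mathbb{H}$ stays in the interior, the term $\frac{2}{u-x_i}\partial_u\psi$ remains bounded throughout the collision and does not enter the leading singular balance, so the indicial analysis of the singular operator $\frac{\kappa}{2}\partial_\eta^2+\frac{2}{\eta}\partial_\eta+(1-\frac{6}{\kappa})\frac{1}{\eta^2}$ is exactly as in the $u=\infty$ situation. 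Subtracting the two equations and isolating the finite part yields $(h_i-h_{i+1})\psi=0$, hence $h_i(x,u)=h_{i+1}(x,u)$; chaining this over consecutive points with nothing between them gives the common function $h(x,u)$ of the corollary.

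I expect the main obstacle to be the fusion analysis of part~(ii): one must control the asymptotic expansion of $\psi$ as $x_{i+1}\to x_i$ finely enough to separate the universal singular channel (common to the equations for $i$ and $i+1$) from the finite remainder whose discrepancy is precisely $(h_i-h_{i+1})\psi$, while verifying that the interior contribution from $u$ stays strictly subordinate to the boundary singularity. A secondary delicate point is the rigorous passage from local commutation of the measures to the operator identity in the first step, where the capacity reparametrization of Corollary~\ref{gamma 1 gamma2 capacity change} must be applied uniformly in the order-$\epsilon\delta$ expansion rather than merely formally.
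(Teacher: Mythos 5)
The paper offers no proof of this theorem beyond the citation ``See \cite{Dub07}'', and your sketch is a faithful reconstruction of exactly that argument: the infinitesimal two-curve exchange with the capacity-reparametrization factor of Corollary~\ref{gamma 1 gamma2 capacity change} producing the bracket identity, the vanishing of the $\partial_i\partial_j$ coefficient forcing $\partial_i b_j=\partial_j b_i$ and hence the potential $\psi$ on the simply connected chamber, the first-order coefficients yielding the null vector equations with $h_i$ depending only on $x_i$ and $u$, and the fusion analysis giving $h_i=h_{i+1}$. The step you flag as the main obstacle --- controlling the expansion of $\psi$ as $x_{i+1}\to x_i$ finely enough to isolate $(h_i-h_{i+1})\psi$ --- is indeed where the real work lies in Dub\'edat's proof, so your outline is sound and follows essentially the same route the paper relies on.
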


\begin{proof}[Proof of theorem (\ref{commutation relation for chordal with marked point u infty}) and theorem (\ref{commutation relation for chordal with marked point u in H})]
See \cite{Dub07}.
\end{proof}

\subsection{Conformal covariance representative of partition functions}

Now, let us focus on how \( {\rm Aut}(\mathbb{H}) \)-invariance imposes constraints on the drift terms and how to choose a conformally covariant partition function within the equivalence class.

\begin{defn}[Basic properties of  \( {\rm Aut}(\mathbb{H}) \)]
We summarize some basic properties of the conformal group \( {\rm Aut}(\mathbb{H}) \), 

\( {\rm Aut}(\mathbb{H}) \) is isomorphic to \( PSL_2(\mathbb{R}) \). By the well-known Iwasawa decomposition of $SL(2,\mathbb{R})$, see \cite{S85}, every element \( \tau \in {\rm Aut}(\mathbb{H}) \) can be written as 
\[
\tau(z) = T_{a,n} \circ \rho_{\theta},
\]
where $$ \rho_\theta(z) = \frac{\cos \theta \cdot z + \sin\theta}{-\sin\theta \cdot z + \cos\theta} $$ and $$ T_{a,n}(z) = \frac{az}{nz+\frac{1}{a}} $$
where $a>0$, $n\in \mathbb{R}$ are two real constants.

Geometrically, this decomposition shows that \( {\rm Aut}(\mathbb{H}) \) is an \( S^1 \)-bundle over \( \mathbb{H} \).

\end{defn}

\begin{thm}[Conformal invariance under ${\rm Aut}(\mathbb{H},\infty)$]
\label{conformal invariance aut H infty}
 For conformal map $\tau \in {\rm Aut}(\mathbb{H},\infty)$, $\tau(z)=az+b$, the drift term
$b_i(\boldsymbol{x},u)$ is a pre-schwarz form, i.e.
$$b_i=\tau^{\prime} \widetilde{b}_i \circ \tau+ \frac{6-\kappa}{2}\left(\log \tau^{\prime}\right)^{\prime}= a \cdot \widetilde{b}_i \circ \tau$$

\begin{itemize}
\item[(i)]
The term $h(x)$ in the null vector equation (\ref{null vector equations in mathbb H}) are identical $0$,
$$h(x) \equiv 0$$
\item[(ii)] There exists a dilatation constant $d$ such that
\begin{equation}
\psi\left(x_1,x_2,\ldots,x_n,\infty\right)=\left(a^{\frac{n(\kappa-6)}{2\kappa}-d}\right)\cdot
\psi(ax_1+b,ax_2+b,\ldots,ax_n+b,\infty)
\end{equation}

\end{itemize}

\end{thm}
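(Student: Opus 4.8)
The plan is to extract everything from the pre-Schwarzian transformation rule for the drift (Remark~\ref{drift term pre schwarz form}, itself a consequence of Theorem~\ref{random coordinate change}) together with the conformal-invariance axiom, and then substitute the resulting scaling identity for $\psi$ back into the null vector equation~\eqref{null vector equations in mathbb H infty}. First I would specialize the pre-Schwarzian rule to $\tau(z)=az+b\in{\rm Aut}(\mathbb{H},\infty)$: since $\tau'\equiv a$ is constant, $(\log\tau')'=0$ and the rule collapses to $b_i=a\,\widetilde b_i\circ\tau$, which is precisely the first display of the theorem. The input that upgrades this to a statement about $\psi$ is conformal invariance: the $\tau$-image of the system started at $\boldsymbol{x}$ is again a standard multiple chordal SLE in $(\mathbb{H};\tau(x_1),\dots,\tau(x_n),\infty)$, hence governed by the same drift function, so $\widetilde b_i=b_i$ and therefore $b_i(\boldsymbol{x})=a\,b_i(a\boldsymbol{x}+b)$. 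Writing $b_i=\kappa\,\partial_i\log\psi$ and using $\partial_{x_i}[\log\psi(a\boldsymbol{x}+b)]=a\,(\partial_i\log\psi)(a\boldsymbol{x}+b)$, this is equivalent to $\partial_{x_i}\bigl[\log\psi(\boldsymbol{x})-\log\psi(a\boldsymbol{x}+b)\bigr]=0$ for every $i$, so $\psi(\boldsymbol{x})=e^{c(\tau)}\psi(a\boldsymbol{x}+b)$ for a constant $c(\tau)$ independent of $\boldsymbol{x}$. Composing two maps and comparing the two ways of rewriting $\psi$ shows $c(\tau_2\circ\tau_1)=c(\tau_1)+c(\tau_2)$, i.e.\ $\tau\mapsto c(\tau)$ is an additive homomorphism on ${\rm Aut}(\mathbb{H},\infty)$.

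For part (i) I would feed this scaling identity into the null vector equation. By Theorem~\ref{commutation relation for chordal with marked point u infty}(ii) the functions $h_i$ coincide with a single $h$, so $\mathcal L_i^{(0)}\psi=-h(x_i)\psi$, where $\mathcal L_i^{(0)}=\frac{\kappa}{2}\partial_{ii}+\sum_{j\neq i}\frac{2}{x_j-x_i}\partial_i+(1-\frac{6}{\kappa})\sum_{j\neq i}\frac{1}{(x_j-x_i)^2}$. Because $\mathcal L_i^{(0)}$ involves only differences $x_j-x_i$ and derivatives, a direct computation gives $\mathcal L_i^{(0)}[\psi(a\cdot+b)](\boldsymbol{x})=a^2(\mathcal L_i^{(0)}\psi)(a\boldsymbol{x}+b)=-a^2h(ax_i+b)\,\psi(a\boldsymbol{x}+b)$; on the other hand $\psi(a\cdot+b)=e^{-c(\tau)}\psi$ yields $\mathcal L_i^{(0)}[\psi(a\cdot+b)]=-h(x_i)\,\psi(a\boldsymbol{x}+b)$. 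Comparing gives $h(x_i)=a^2h(ax_i+b)$ for all $a>0$ and $b\in\mathbb{R}$. Setting $a=1$ forces $h$ to be constant, and then arbitrary $a$ forces that constant to vanish, so $h\equiv0$.

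For part (ii) I would pin down $c(\tau)$ using only the group structure. Since the target $(\mathbb{R},+)$ is abelian, $c$ annihilates all commutators; conjugating a translation by a dilation gives $D_aT_bD_a^{-1}=T_{ab}$, whence $c(T_{ab})=c(T_b)$. Writing the translation part as $c(T_b)=\lambda b$ (an additive one-parameter homomorphism), this reads $a\lambda b=\lambda b$ for all $a>0$, forcing $\lambda=0$; thus $c$ is independent of $b$ and depends only on the dilation factor $a$. The map $a\mapsto c(a)$ is then an additive homomorphism $\mathbb{R}_{>0}\to\mathbb{R}$, so by smoothness $c(a)=\gamma\log a$ and $e^{c(\tau)}=a^{\gamma}$. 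Defining $d$ through $\gamma=\frac{n(\kappa-6)}{2\kappa}-d$ turns $\psi(\boldsymbol{x})=a^{\gamma}\psi(a\boldsymbol{x}+b)$ into exactly~\eqref{conformal invariant Aut H infty}.

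The genuinely delicate point is the $b$-independence of the prefactor, equivalently $\lambda=0$: a priori the translation part of $c$ could be nonzero, and it is the non-commutativity of the $ax+b$ group (translations being commutators) that rules this out; I regard this group-theoretic step as the main obstacle and prefer it to any purely analytic attempt. Two further points require care: one must apply conformal invariance to identify $\widetilde b_i$ with $b_i$ itself rather than with another representative of the equivalence class (legitimate here because $u=\infty$ is fixed, so the ambiguity $f(u)$ is a mere constant), and one must invoke the assumed smoothness of $\psi$ to pass from the additive/homomorphism functional equations to their logarithmic solutions.
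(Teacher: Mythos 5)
Your proposal is correct and follows essentially the same route as the paper: part (i) reduces to the observation that $h$ is translation invariant and homogeneous of degree $-2$ (the paper extracts this from the homogeneity of $b_i$, you from the scaling of $\psi$, which is the same content), and part (ii) is the paper's argument of solving the Cauchy functional equations for the dilation and translation parts and then using the relation $D_a\circ L_{b/a}=L_b\circ D_a$ --- your commutator phrasing $D_aT_bD_a^{-1}=T_{ab}$ is the identical group-theoretic step --- to force the translation coefficient to vanish.
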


\begin{thm}[Conformal invariance under ${\rm Aut}(\mathbb{H})$]
\label{conformal invariance aut H}
 For conformal map $\tau \in {\rm Aut}(\mathbb{H})$, the drift term
$b(\boldsymbol{\theta},u)$ is a pre-schwarz form, i.e.
$$b_i=\tau^{\prime} \widetilde{b}_i \circ \tau+ \frac{6-\kappa}{2}\left(\log \tau^{\prime}\right)^{\prime}$$
\begin{itemize}

\item[(i)]
   
Equivalently, in terms of partition function $\psi$, there exists a smooth function  $F(\tau,u): {\rm Aut}(\mathbb{H})\times \mathbb{H} \rightarrow \mathbb{R}$ such that:
\begin{equation}
   \log(\psi)-\log(\psi \circ \tau)+\frac{\kappa-6}{2\kappa}\sum_i \log(\tau'(z_i)) = F(\tau,u) 
\end{equation}
    and $F$ satisfies the following functional equation:
   \begin{equation} \label{functional equation for F}
      F(\tau_1 \tau_2 ,u)= F(\tau_1,\tau_2(u))+F(\tau_2,u)
   \end{equation}
\item[(ii)]
There exists a rotation constant \( \lambda(\infty) \) such that:
    \[
    F(D_a, \infty) = -d\cdot\log(a).
    \]
    \item[(iii)] Suppose \( F_1(\tau, u) \) and \( F_2(\tau, u) \) correspond to partition functions \( \psi_1 \) and \( \psi_2 \). If their dilatation constants \( d_1 = d_2 \), then there exists a function \( g(u) \) such that:
    \[
   \psi_2 = g(u) \cdot \psi_1.
    \]

    \item[(iv)] For \( \tau \in \mathrm{Aut}(\mathbb{H}) \) and $u \in \overline{\mathbb{H}}$, we define:
    \begin{itemize}
    \item if $\tau(u) \neq \infty, u \neq \infty$
    \[
    F(\tau, u) = d \cdot \log \left(\tau'(u)\right),
    \]
    \item If $\tau(u)=\infty$, $u \neq \infty$ 
    \[
    F(\tau, u)= F(-\frac{1}{\tau},u)
    \]
    \item If $\tau(u)\neq \infty$, $u= \infty$
    \[
    F(\tau, u)= F(-\frac{1}{\tau},0)
    \]
    \item If $\tau(z)=az+b$ and $u=\infty$,
    \[
    F(\tau,\infty) = -d \cdot \log(a)
    \]
        \end{itemize}
    Then $F(\tau,u)$ satisfy equation (\ref{functional equation for F}), with dilatation constant $d$.
\end{itemize}
\end{thm}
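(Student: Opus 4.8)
The plan is to deduce all four parts from a single structural input: the drift $b_i = \kappa\,\partial_i\log\psi$ transforms as a pre-Schwarzian form under any $\tau\in{\rm Aut}(\mathbb{H})$, which is exactly Remark~\ref{drift term pre schwarz form} together with Theorem~\ref{random coordinate change}. For the existence statement in (i) I would write the pre-Schwarzian law $b_i = \tau'\,\widetilde b_i\circ\tau + \frac{6-\kappa}{2}(\log\tau')'$ with $b_i=\kappa\,\partial_{x_i}\log\psi$ and $\widetilde b_i = \kappa\,(\partial_i\log\psi)\circ\tau$, and rearrange it into
\[
\partial_{x_i}\Big[\log\psi(\boldsymbol{x},u) - \log\psi(\tau\boldsymbol{x},\tau u) + \tfrac{\kappa-6}{2\kappa}\textstyle\sum_j\log\tau'(x_j)\Big]=0,\qquad i=1,\dots,n.
\]
Since the bracketed expression is annihilated by every $\partial_{x_i}$, it can depend only on $u$ and on $\tau$; naming it $F(\tau,u)$ yields the first assertion. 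The cocycle identity \eqref{functional equation for F} then follows by setting $\tau=\tau_1\tau_2$, expanding $\log(\tau_1\tau_2)'(x_i)=\log\tau_1'(\tau_2 x_i)+\log\tau_2'(x_i)$ by the chain rule, and telescoping through the intermediate configuration $(\tau_2\boldsymbol{x},\tau_2 u)$. This is a direct computation with no real obstruction.

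For part (ii) I would specialize the definition of $F$ to the dilation $\tau=D_a$, where $D_a'\equiv a$, at $u=\infty$. Because $\infty$ cannot be substituted into the generic formula, its value must be read off from the dilatation equation \eqref{conformal invariant Aut H infty} furnished by Theorem~\ref{conformal invariance aut H infty}: that relation fixes the $a$-scaling of $\psi(\cdot,\infty)$ with exponent $\tfrac{n(\kappa-6)}{2\kappa}-d$, and combining it with the explicit $\tfrac{\kappa-6}{2\kappa}\,n\log a$ correction term isolates $F(D_a,\infty)=-d\log a$. This both defines and identifies the dilatation constant $d$ as the scalar invariant governing behaviour at infinity.

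Part (iv) is where the genuine work lies. I would treat the piecewise formula as a proposed canonical cocycle $F_d(\tau,u)=d\log\tau'(u)$ on the generic locus, extended to $\{u=\infty\}$ and $\{\tau(u)=\infty\}$ by pre- or post-composing with the inversion $\iota(z)=-1/z$ (the meaning of the symbol $-1/\tau$), and verify the functional equation \eqref{functional equation for F} case by case. The all-finite case is immediate from the chain rule, since $d\log(\tau_1\tau_2)'(u)=d\log\tau_1'(\tau_2 u)+d\log\tau_2'(u)$. The hard part will be the degenerate cases: I must check that the $\iota$-regularization is independent of the auxiliary choices and stays compatible with the cocycle relation whenever $u$, $\tau_2(u)$, or $(\tau_1\tau_2)(u)$ passes through $\infty$. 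This reduces to a finite but delicate bookkeeping, using that $\iota\in{\rm Aut}(\mathbb{H})$ up to orientation and that the cocycle relation for $F_d$ is stable under $\tau\mapsto\iota\tau$ and $\tau\mapsto\tau\iota$; the real subtlety is gluing the four boundary definitions consistently at configurations where two of these marked points simultaneously equal $\infty$.

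Finally, for part (iii) I would set $G:=\log(\psi_2/\psi_1)$ and subtract the two instances of the identity from (i); the anomalous $\tfrac{\kappa-6}{2\kappa}\sum_i\log\tau'$ terms cancel, leaving $G(\boldsymbol{x},u)-G(\tau\boldsymbol{x},\tau u)=F_2(\tau,u)-F_1(\tau,u)$, and differentiating in $x_i$ shows $\partial_i G$ is a genuine conformal covector. Invoking (iv) and (ii) to identify each $F_k$ with the canonical cocycle of weight $d_k$ modulo a $u$-coboundary, the hypothesis $d_1=d_2$ should force $F_2-F_1$ to be a coboundary in $u$ alone, which can be absorbed to make $G$ fully conformally invariant. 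I expect this last passage to be the principal obstacle: upgrading the equality of the scalar constants $d_1,d_2$ to the multiplicative relation $\psi_2=g(u)\,\psi_1$ requires showing that $G$ reduces to a function of $u$, and conformal invariance of $G$ alone is not sufficient once $n\geq 2$, since nontrivial ${\rm Aut}(\mathbb{H})$-invariants of the configuration exist. Closing this gap is exactly where one must use the classification of ${\rm Aut}(\mathbb{H})$-cocycles valued in functions of $u$ (that such a cocycle is determined by $d$ up to a $u$-coboundary) together with the compatibility of $\psi_1,\psi_2$ with the same null-vector system, rather than any soft invariance argument.
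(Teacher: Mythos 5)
Your route coincides with the paper's for all four parts: (i) is proved there exactly as you describe (annihilation of the bracketed quantity by every $\partial_{x_i}$, then telescoping through the intermediate configuration $(\tau_2\boldsymbol{x},\tau_2 u)$); for (ii) the paper re-derives the value $-d\log a$ internally, by solving the Cauchy functional equations for $F(D_x,\infty)$ and $F(L_x,\infty)$ and using $D_a\circ L_{b/a}=L_b\circ D_a$ to kill the translation part, rather than importing the dilatation relation of Theorem \ref{conformal invariance aut H infty}, but the content is identical; and for (iv) the paper offers only ``direct computation,'' so your case-by-case plan with the inversion regularization is, if anything, more explicit than what is written there.

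The substantive point is (iii). The paper does what you propose --- decompose $\tau=S_v\circ L_b\circ D_a\circ S_u^{-1}$ with $S_u(\infty)=u$, obtain $F_1(\tau,u)-F_2(\tau,u)=f(v)-f(u)$ with $f(u)=F_1(S_u,\infty)-F_2(S_u,\infty)$ once $d_1=d_2$, and subtract the two instances of the identity from (i) --- and then simply asserts that $\log(\psi_1/\psi_2)-\log\bigl((\psi_1\circ\tau)/(\psi_2\circ\tau)\bigr)=f(v)-f(u)$ ``is equivalent to'' $\psi_2=c\,e^{f(u)}\psi_1$. That is precisely the step you flag: it amounts to claiming that the ${\rm Aut}(\mathbb{H})$-invariant function $H=\log(\psi_1/\psi_2)+f(u)$ of the $n+1$ boundary points is constant, which is automatic only for $n\le 2$ (at most three boundary points, on which $PSL_2(\mathbb{R})$ acts transitively --- so your threshold $n\ge 2$ should be $n\ge 3$); for $n\ge 3$ there are nontrivial cross-ratio invariants and some extra input is genuinely needed, e.g.\ that both $\psi_i$ solve the same null-vector system or that they induce the same SLE law, as in the intended application in Theorem \ref{conformal covariant solutions}. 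So you have correctly located a real gap, but it is a gap in the paper's own proof as much as in your proposal, and neither argument closes it. (Incidentally, the paper's displayed subtraction also carries a stray $(\omega_1-\omega_2)\theta$ term, apparently a leftover from a radial analogue, which your cleaner computation avoids.)
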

\begin{thm}\label{conformal covariant solutions} For a multiple chordal SLE($\kappa$) system with $n$ SLEs starting from $x_1,x_2,\ldots,x_n$ with a marked boundary point $u \in \partial \overline{\mathbb{H}}$.
\begin{itemize}
\item[(i)]
Two partition functions $\widetilde{\psi}$ and $\psi$ are considered \emph{equivalent} if they differ by a multiplicative factor $f(u)$.
\begin{equation}
\tilde{\psi}=f(u)\cdot\psi
\end{equation}
where $f(u)$ is an arbitrary positive real smooth function depending on the marked boudary point $u$.
Under this equivalence, $\tilde{\psi}$ and $\psi$ induce identical multiple chordal SLE($\kappa$) systems.
\item[(ii)]
Within the equivalence class, we can choose $\psi$ to satisfy conformal covariance, that this, under a conformal map $\tau \in {\rm Aut}(\mathbb{H})$, the function $\psi(z_1,z_2,\ldots,z_n,u)$ transforms as
\begin{equation}
\psi(z_1,z_2,\ldots,z_n,u)=\left(\prod_{i=1}^{n}\tau'(z_i)^{\frac{6-\kappa}{2\kappa}}\right)\tau'(u)^{d}\psi\left(\tau(z_1),\tau(z_2),\ldots,\tau(z_n),\tau(u)\right)
\end{equation}
where we understand $\tau'(u)$ as
\begin{itemize}
\item If $\tau(u) \neq \infty, u \neq \infty$,  $\tau'(u)$ is defined as usual,
\item If $\tau(u)=\infty$, $u \neq \infty$ 
    \[
   \tau'(u):= \left(-\frac{1}{\tau(u)}\right)'
    \]
    \item If $\tau(u)\neq \infty$, $u= \infty$
    \[
    \tau'(u):= \left(-\frac{1}{\tau(0)}\right)'
    \]
    \item If $\tau(z)=az+b$ and $u=\infty$,
    \[
    \tau'(u):= a^{-1}
    \]
\end{itemize}
Here, $d$ is an undetermined real constant, representing the scaling exponents at $u$.
\end{itemize}

\end{thm}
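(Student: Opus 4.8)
\section*{Proof proposal}

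The plan is to dispatch part (i) immediately from the drift formula and to reduce part (ii) to the triviality of a group cocycle on the boundary $\partial\overline{\mathbb{H}}$. For part (i), recall from Theorems \ref{commutation relation for chordal with marked point u infty} and \ref{commutation relation for chordal with marked point u in H} that the $i$-th marginal is driven by $b_i = \kappa\,\partial_i\log\psi$, where $\partial_i = \partial_{x_i}$ differentiates only in the growth coordinate; the evolution of the marked point $u$ is the deterministic Loewner drift, independent of the choice of $\psi$. Since $f$ depends on $u$ alone, $\partial_i\log f(u)=0$, so $\kappa\,\partial_i\log\widetilde\psi = \kappa\,\partial_i\log\psi = b_i$ for every $i$. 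Thus $\widetilde\psi=f(u)\psi$ and $\psi$ yield identical drift terms, hence identical marginal SDEs, and by the compatibility (domain Markov) structure of Definition \ref{localization of measure} they induce the same local multiple chordal SLE($\kappa$) system. This proves (i) and shows that conformal covariance may be sought freely within an equivalence class.

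For part (ii), I would work with the cocycle $F(\tau,u)=\log\psi - \log(\psi\circ\tau) + \tfrac{\kappa-6}{2\kappa}\sum_i\log\tau'(z_i)$ of Theorem \ref{conformal invariance aut H}, which satisfies the functional equation \eqref{functional equation for F}. The desired covariance is exactly the statement that, for the chosen representative, $F(\tau,u)=d\log\tau'(u)$ with $\tau'(u)$ read off from the four cases in the theorem. Because the chain rule gives $\log(\tau_1\tau_2)'(u)=\log\tau_1'(\tau_2 u)+\log\tau_2'(u)$, the map $(\tau,u)\mapsto d\log\tau'(u)$ is itself a cocycle for \eqref{functional equation for F}, so the defect $G(\tau,u):=F(\tau,u)-d\log\tau'(u)$ again satisfies \eqref{functional equation for F}. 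A short computation shows that replacing $\psi$ by $\widetilde\psi=f(u)\psi$ alters $F$ by the coboundary $\log f(u)-\log f(\tau(u))$; hence (ii) is equivalent to producing a positive smooth $f$ with $G(\tau,u)=\log f(\tau(u))-\log f(u)$, i.e.\ to exhibiting $G$ as a coboundary on $\partial\overline{\mathbb{H}}$.

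To trivialize $G$ I would exploit the transitivity of ${\rm Aut}(\mathbb{H})\cong PSL_2(\mathbb{R})$ on $\partial\overline{\mathbb{H}}=\mathbb{R}\cup\{\infty\}$. Fixing the base point $\infty$, for each $u$ choose $\sigma_u\in{\rm Aut}(\mathbb{H})$ with $\sigma_u(\infty)=u$ and set $\log f(u):=G(\sigma_u,\infty)$. The stabilizer of $\infty$ is the affine group $\{z\mapsto az+b:a>0\}$, and the essential point is that $G$ vanishes on it: by Theorem \ref{conformal invariance aut H infty} the partition function already transforms covariantly under ${\rm Aut}(\mathbb{H},\infty)$ with precisely the dilatation exponent $d$ (translations act trivially and $F(D_a,\infty)=-d\log a=d\log(a^{-1})$ matches the convention $\tau'(\infty)=a^{-1}$), so $G(\tau,\infty)=0$ whenever $\tau$ fixes $\infty$. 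This makes $\log f$ independent of the choice of $\sigma_u$; applying \eqref{functional equation for F} to the product $\tau\sigma_u$, which also sends $\infty$ to $\tau u$, then gives $\log f(\tau u)=G(\tau\sigma_u,\infty)=G(\tau,u)+G(\sigma_u,\infty)=G(\tau,u)+\log f(u)$, i.e.\ $G(\tau,u)=\log f(\tau u)-\log f(u)$. Setting $\widetilde\psi=f(u)\psi$ then yields $\widetilde F(\tau,u)=d\log\tau'(u)$, which is the asserted covariance.

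The hard part is the well-definedness step, namely establishing $G|_{{\rm Stab}(\infty)}\equiv 0$ together with the consistency of the extended definition of $\tau'(u)$ at the cusp. This is precisely where Theorem \ref{conformal invariance aut H}(iv) does the real work: one must check that the case-by-case prescription for $\tau'(u)$ (via $-1/\tau$ when $\tau(u)=\infty$ or $u=\infty$) genuinely satisfies \eqref{functional equation for F}, so that $d\log\tau'(u)$ is a legitimate covariant cocycle and $G$ is globally defined on $\overline{\mathbb{H}}$. I would verify this by splitting into the four cases and invoking the $PSL_2(\mathbb{R})$ composition law, with particular care for the compact rotational factor $\rho_\theta$ in the Iwasawa decomposition, since this is the obstruction to simple transitivity and must be shown to introduce no monodromy in $f$. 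Once these compatibility checks are in place, the construction of $f$ and the conclusion $\widetilde\psi=f(u)\psi$ follow directly from the cocycle computation above.
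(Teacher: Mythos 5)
Your proposal is correct and follows essentially the same route as the paper: part (i) is the same observation that $b_i=\kappa\,\partial_i\log\psi$ is unchanged by a factor $f(u)$, and part (ii) transports the partition function from the base point $\infty$ via a section $u\mapsto\sigma_u$ of the transitive boundary action, using Theorem \ref{conformal invariance aut H infty} for the stabilizer of $\infty$ and the functional equation \eqref{functional equation for F}. Your cocycle/coboundary packaging makes explicit the well-definedness of $f$ (independence of the choice of $\sigma_u$) and the final covariance check, which the paper's terser proof — fixing the rotation section $S_u$ and appealing to Theorem \ref{conformal invariance aut H}(iii) — leaves implicit.
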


\begin{proof}[Proof of theorem (\ref{conformal invariance aut H infty})]
\
\begin{itemize}
    \item[(i)] By the null vector equation (\ref{null vector equations in mathbb H infty}),
\begin{equation}
\begin{aligned}
h_i(x_i)&=-\frac{\kappa}{2} \frac{\partial_{ii} \psi}{\psi}-\sum_{j \neq i} \frac{2}{x_j-x_i}\frac{\partial_i \psi}{\psi}-\left(1-\frac{6}{\kappa}\right) \sum_{j \neq i}\frac{1}{(x_j-x_i)^2}
\\&= \frac{\kappa}{2}b^{2}_i -\frac{\kappa}{2}\partial_i b_i-\frac{2}{x_j-x_i}b_i-\left(1-\frac{6}{\kappa}\right) \sum_{j \neq i}\frac{1}{(x_j-x_i)^2}
\end{aligned}
\end{equation}
Since $b_i$ is translation invariant and homogeneous of degree $-1$ under ${\rm Aut}(\mathbb{H},\infty)$.
By the above equation for $h_i(x_i)$, we obtain that $h_i$ is translation invariant and homogenous of degree $-2$. The only possibility is that
$$ h_i \equiv 0$$
\item[(ii)]

Since $b_i = \kappa\partial_i \log(\psi)$, by the dilatation invariance of $b_i$, for dilatation transformation $D_a$:
$$
\partial_i \left(\log(\psi)- \log(\psi \circ D_a) \right)=0
$$

for $i=1,2,\ldots,n$. Thus, independent of $x_1,x_2,\ldots,x_n$. We obtain that there exists a function $F(a): \mathbb{R} \rightarrow \mathbb{R}$ such that 
$$
   \log(\psi)-\log(\psi \circ D_a)=F(a)
$$
By the chain rule, for $a_1,a_2\in \mathbb{R}^+$,  $F$ satisfies the Cauchy functional equation
$$F(a_1)+F(a_2)=F(a_1a_2)$$
The only solution for the Cauchy functional equation is linear. Thus, there exists a constant $d \in \mathbb{R}$.
$$F(a)=-d \log(a)$$

Similarly, let $L_b$ be the translation transformaiton,

$$
\partial_i \left(\log(\psi)- \log(\psi \circ L_b) \right)=0
$$

for $i=1,2,\ldots,n$. Thus, independent of $x_1,x_2,\ldots,x_n$. We obtain that there exists a function $G(b): \mathbb{R} \rightarrow \mathbb{R}$ such that

$$
   \log(\psi)-\log(\psi \circ L_b)=F(b)
$$
By the chain rule, for $b_1,b_2\in \mathbb{R}$,  $G$ satisfies the Cauchy functional equation
$$G(b_1)+G(b_2)=G(b_1+b_2)$$
The only solution for the Cauchy functional equation is linear. Thus, there exists a constant $d \in \mathbb{R}$.
$$G(a)=-c \cdot b$$

On the other hand note that
$$az+b= a(z+\frac{b}{a})$$
$$D_a \circ L_{\frac{b}{a}} = L_b \circ D_a$$
By the chain rule, 
$$\log \psi \circ D_a \circ L_{\frac{b}{a}}- \log \psi = \log \psi \circ D_a \circ L_{\frac{b}{a}}-\log \psi \circ D_a+ \log \psi \circ D_a- \log \psi =  c \cdot \frac{b}{a}-d \cdot \log b$$
$$\log \psi \circ L_b \circ D_a- \log \psi = \log \psi \circ L_b \circ D_a-\log \psi \circ L_b+ \log \psi \circ L_b- \log \psi = c\cdot b - d\cdot \log b$$
combining above two equations, we obtain that for arbitrary $a>0$ and $b \in \mathbb{R}$:
$$c \cdot b -d \cdot \log b = c \cdot \frac{b}{a}-d \cdot \log b$$
This implies $c =0$.
\end{itemize}
\end{proof}

\begin{proof}[Proof of theorem (\ref{conformal invariance aut H})]
\
\begin{itemize}
\item[(i)] Note that by corollary (\ref{drift term pre schwarz form}), under a conformal map $\tau \in {\rm Aut}(\mathbb{H})$, the drift term $b_i(z_1,z_2,\ldots,z_n,u)$ transforms as

$$
b_i=\tau^{\prime}(z_i) \left(b_i \circ \tau \right)+ \frac{6-\kappa}{2}\left(\log \tau^{\prime}(z_i)\right)^{\prime}
$$
   Since $b_i = \kappa\partial_i \log(\psi)$
$$
\kappa\partial_i \log(\psi)=\kappa\tau^{\prime}(z_i) \partial_i \log(\psi \circ \tau) + \frac{6-\kappa}{2}\left(\log \tau^{\prime}(z_i)\right)^{\prime}
$$
which implies 
$$
\partial_i\left(\log(\psi)-\log(\psi \circ \tau)+\frac{\kappa-6}{2\kappa}\sum_i \log(\tau'(z_i)) \right)=0
$$
for $i=1,2,\ldots,n$. Thus, independent of variables $z_1,z_2,\ldots,z_n$. 

We obtain that there exists a function $F: {\rm Aut}(\mathbb{H})\times \mathbb{H} \rightarrow \mathbb{C}$ such that 
$$
   \log(\psi)-\log(\psi \circ \tau)+\frac{\kappa-6}{2\kappa}\sum_i \log(\tau'(z_i)) = F(\tau,u) 
$$
By direct computation, we can show that
$$
\begin{aligned}
F(\tau_1 \tau_2 ,u)&= \log(\psi)-\log(\psi \circ \tau_1 \tau_2)+\frac{\kappa-6}{2\kappa}\sum_i \log((\tau_1 \tau_2)'(z_i))  \\
&=\log(\psi)-\log(\psi\circ \tau_2)+\log(\psi\circ \tau_2)-\log(\psi \circ \tau_1 \tau_2) \\
&+\frac{\kappa-6}{2\kappa}\sum_i \log( \tau_{2}'(z_i))+\frac{\kappa-6}{2\kappa}\sum_i \log(\tau_1'(\tau_2(z_i)))\\
&=F(\tau_1,\tau_2(u))+F(\tau_2,u)
\end{aligned}
$$

\item[Part (ii)]

By the functional equation (\ref{functional equation for F}) and $u=\infty$ is the fixed point of the dilatation transformation $D_{a}(z)=az$ and translation transformation $L_{b}(z)=z+b$, we obtain that
$$F(L_{b}\circ D_{a},\infty)=F(L_b,D_a(\infty))+F(D_a,\infty)=F(L_b,\infty)+F(D_a,\infty)$$

Since $F(L_{x+y},\infty)=F(L_x,\infty)+F(L_y,\infty)$, $F(D_{xy},\infty)=F(D_x,\infty)+F(D_y,\infty)$.
These two are Cauchy functional equations, the only solutions are linear, therefore,
there exists two constant $c_1,c_2$ such that:
$F(L_x,\infty)=c_1 x$
$F(D_x,\infty)=c_2 \log x$

On the other hand note that
$$az+b= a(z+\frac{b}{a})$$
$$D_a \circ L_{\frac{b}{a}} = L_b \circ D_a$$
which implies that
$$F(L_{b}\circ D_{a},\infty) = F(D_a \circ L_{\frac{b}{a}},\infty)$$
$$c_1 b + c_2 \log b = c_1 \cdot \frac{b}{a}+ c_2 \log b$$
This implies $c_1 =0$, denote $-c_2 =d$, we obtain the desired result

\item[(iii)]

Let $v = \tau(u)$, denote $S_{u}$ the unique rotation map in the form of 
$$ S_u(z) = \frac{\cos \theta \cdot z + \sin\theta}{-\sin\theta \cdot z + \cos\theta} $$
such that $S_{u}(\infty)= u$

then by the functional equation (\ref{functional equation for F}), we obtain that:
$$
F_i(\tau, u)= F_i(S_v \circ L_{b}\circ D_{a} \circ S^{-1}_{u}, S_u(\infty))
= -F_i(S_u,\infty) +F_i(S_v \circ A_\theta, \infty)
= F_i(S_v,\infty)-F_i(S_u,\infty)- d_i \log(a)
$$
for $i=1,2$.
we define $$f(u) = F_1(S_u,\infty)-F_2(S_u,\infty)$$

Now, suppose $\psi_i$ are corresponding partition functions.
By the definition of function $F(\tau,u)$, $\psi_i$ satisfies the following functional equation
\begin{equation}
   \log(\psi_i)-\log(\psi_i \circ \tau)+\frac{\kappa-6}{2\kappa}\sum_j \log(\tau'(z_j)) = F_i(\tau,u) 
\end{equation} 
Subtracting two equations, we obtain that
$$
\log(\frac{\psi_1}{\psi_2}) - \log( \frac{\psi_1 \circ \tau}{\psi_2 \circ \tau}) = f(v)-f(u)+ (\omega_1-\omega_2) \theta
$$
Then if $d_1=d_2$
$$
\log(\frac{\psi_1}{\psi_2}) - \log( \frac{\psi_1 \circ \tau}{\psi_2 \circ \tau}) = f(v)-f(u)
$$
which is equivalent to
$$ \psi_2 =c e^{f(u)} \psi_1 $$
thus $$g(u) = c e^{f(u)} $$ 
where $c>0$.

\item[(iv)] The identity (\ref{functional equation for F}) can be verified by direct computation.
\end{itemize}

\end{proof}

\begin{proof}[Proof of theorem (\ref{conformal covariant solutions})]
\

\begin{itemize}
 \item[(i)]
The drift term in the marginal law for multiple chordal SLE($\kappa$) systems is given by 
$$b_i =\kappa \partial_j \log(\psi)$$

If two partition functions differ by 
a multiplicative function $f(u)$.
\begin{equation}
\tilde{\psi}=f(u)\cdot\psi
\end{equation}
where $f(u)$ is an arbitrary positive real smooth function depending on the marked interior point $u$.
Note that
$$b_i =\kappa \partial_j \log(\psi)=\kappa \partial_j \log(\widetilde{\psi})= \widetilde{b_i}$$
Thus $\widetilde{\psi}$ and $\psi$ induce identical multiple chordal SLE($\kappa$) system.
\item[(ii)]
For a multiple chordal SLE($\kappa$) system with partition function \(\psi(\boldsymbol{x}, u)\), we proceed as follows:

 Let $d$ be the corresponding dilatation constant. Let $S_u$ be the unique rotation map such that $S_u(\infty)=u$.
 Define:
    \[
    \psi(x_1, x_2, \ldots, x_n, \infty) = \left(\prod_{i=1}^{n} S_u'(x_i)^{\frac{\kappa-6}{2\kappa}}\right)\cdot S_u'(\infty)^{d} \cdot\tilde{\psi}\left(S_u(x_1), S_u(x_2), \ldots, S_u(x_n), u\right)
    \]
    where $S_{u}'(\infty):= \left(-\frac{1}{S_{u}(0)}\right)'$.
    
    Since \(\tilde{\psi}\) and \(\psi\) share the same dilatation constant \(d\). By (iii) of Theorem (\ref{conformal invariance aut H}), there exists a function \(f(u)\) such that:
    \[
    \tilde{\psi} = f(u) \cdot \psi.
    \]
\end{itemize}

\end{proof}

\section{Coulomb gas integral solutions of type $(n,m)$}

\subsection{Classification and link pattern}
\label{Classification of screening solutions}

Based on the Coulomb gas integral method as introduced in \cites{JZ25s,JZ25t}, we are able to construct solutions to the null vector PDEs and Ward's identities via screening.
These solutions satisfy the following null vector equations:

\begin{equation} \label{null vector equation for Screening solutions}
\left[\frac{\kappa}{4} \partial_j^2+\sum_{k \neq j}^{n}\left(\frac{\partial_k}{x_k-x_j}-\frac{(6-\kappa) /  2\kappa}{\left(x_k-x_j\right)^2}\right)+\frac{\partial_{n+1}}{u-x_j} \right.
\left. -\frac{\lambda_{(b)}(u)}{\left(u-x_j\right)^2}\right] \mathcal{J}\left(\boldsymbol{x},u\right)=0
\end{equation}
for $j=1,2,\ldots,n$,
and the following ward identities: 
\begin{equation} \label{Ward identities for screening solutions}
\begin{aligned}
&\left[\sum_{i=1}^{n} \partial_{x_i}+ \partial_u\right] \mathcal{J}(\boldsymbol{x}) =0,\\
& \left[\sum_{i=1}^{n}\left(x_i \partial_{x_i}+\frac{6-\kappa}{2\kappa}\right)+ u\partial_u+\lambda_{(b)}(u)u\right]\mathcal{J}(\boldsymbol{x})=0, \\
& \left[\sum_{i=1}^{n}\left(x_i^2 \partial_{x_i}+\frac{6-\kappa}{\kappa}x_i\right)+ u^2\partial_u+2\lambda_{(b)}(u)u\right] \mathcal{J}(\boldsymbol{x})=0
\end{aligned}
\end{equation}
where $\lambda_{(b)}(u)$ is the conformal dimensions of $u$.

We need to choose a set of integration contours to integrate $\Phi$. we will explain how we choose integration contours, which lead the screening solutions, see theorem (\ref{solution space to null and ward}). We conjecture that these screening solutions span the solution space of the null vector equations (\ref{null vector equation for Screening solutions}) 
and the Ward's identities (\ref{Ward identities for screening solutions}).

To do this, let's begin by defining the link patterns that characterize the topology of integration contours.

\begin{defn}[Chordal link pattern] Given $\boldsymbol{x}=\{x_1,x_2,...,x_n\}$ on the real line, a link pattern is a homotopically equivalent class of non-intersecting curves connecting pair of boundary points (links) or connecting boundary points and the infinity (rays). The link patterns with $n$ boundary points and $m$ links are called $(n,m)$-links, denoted by ${\rm LP}(n,m)$.

The number of chordal $(n,m)$-links is given by $|{\rm LP}(n,m)|=C_{n}^{m+1}-C_{n}^{m}$.
\end{defn}

When all $\sigma_i= a$, $1\leq i \leq n$, we can assign charge $-2a$ or $2(a+b)$ to screening charges.

\begin{itemize}

\item (Chordal ground solutions)
In the upper half plane $\mathbb{H}$, we assign charge $a$ to $x_1,x_2,\ldots,x_n$, charge $-2a$ to $\xi_1,\ldots,\xi_m$ and charge $\sigma_u=2b-(n-2m)a$ to marked points $u$ 
to maintain neutrality condition ($\rm{NC_b}$).
\begin{equation} \label{multiple chordal SLE kappa master function}
\begin{aligned}
\Phi_{\kappa}\left(x_1, \ldots, x_{n}, \xi_1,\xi_2,\ldots,\xi_m, u \right)= & \prod_{i<j}\left(x_i-x_j\right)^{a^2} \prod_{j<k}\left(x_j-\xi_k\right)^{-2a^2}  \prod_{j<k}\left(\xi_j-\xi_k\right)^{4a^2}  \\
& \prod_{j}(x_i-u)^{a(2b-(n-2m)a)}
\prod_{j}(\xi_j-u)^{-2a(2b-(n-2m)a)}
\end{aligned}
\end{equation}

\begin{itemize}
    \item[(1)] $(-2a)\cdot a=-\frac{4}{\kappa}$.  $\xi_i=x_j$ is a singular point of the type $\left(\xi_i-x_j\right)^{-4 / \kappa}$.
     \item[(2)] $(-2a)\cdot (-2a)=\frac{8}{\kappa}$. $\xi_i=\xi_j$ is a singular point of of the type $(\xi_i-\xi_j)^{\frac{8}{\kappa}}$
     \item[(3)] $(-2a)\cdot (2b-(n-2m)a)=\frac{4(n-2m+2)}{\kappa}$.  $\xi_i=u$ is singular point of the type $(\xi_i-u)^{\frac{4(n-2m+2)}{\kappa}}$.
\end{itemize}

In this case, for $m \leq \frac{n+2}{2}$ and a $(n,m)$ chordal link pattern $\alpha$, we can choose $p$ non-intersecting Pochhammer contours $\mathcal{C}_1,\mathcal{C}_2,\ldots,\mathcal{C}_m$ surrounding pairs of points (which correspond to links in a chordal link pattern) to integrate $\Phi_{\kappa}$, we obtain
\begin{equation}
    \mathcal{J}^{(m, n)}_{\alpha}(\boldsymbol{x}):=\oint_{\mathcal{C}_1} \ldots \oint_{\mathcal{C}_m} \Phi_\kappa(\boldsymbol{x}, \boldsymbol{\xi}) d \xi_m \ldots d \xi_1 .
\end{equation}

Note that the charge at $u$ is given by $\sigma_u=2b-(n-2m)a$, thus $$\lambda_{(b)}(u)= \frac{(2m-n)^2}{\kappa}-\frac{2(2m-n)}{\kappa}+\frac{2m-n}{2}$$

The chordal ground solution $\mathcal{J}^{(m,n)}_{\alpha}$ satisfies the null vector equations (\ref{null vector equation for Screening solutions}) and Ward's identities (\ref{Ward identities for screening solutions}) with above $\lambda_{(b)}(u)$.

The number of solutions we can construct via screening is precisely the number of chordal link patterns. We conjecture that these are exactly all the solutions to the null vector equations.

\item (Chordal excited solutions)In the upper half plane $\mathbb{H}$, we assign charge $a$ to $x_1,x_2,\ldots,x_n$, charge $-2a$ to $\xi_1,\ldots,\xi_m$ and charge $2(a+b)$ to $\xi_1,\ldots,\zeta_q$.
Then, we assign charge $\sigma_u=2b-(n-2m)a-2q(a+b)$ to marked points $u$ 
to maintain neutrality condition ($\rm{NC_b}$).
\begin{equation}
\begin{aligned}
&\Phi_{\kappa}\left(x_1, \ldots, x_{n}, \xi_1,\xi_2,\ldots,\xi_m,\zeta_1,\zeta_2,\ldots,\zeta_q, u \right)=  \\
&\prod_{i<j}\left(x_i-x_j\right)^{a^2} \prod_{j<k}\left(x_j-\xi_k\right)^{-2a^2}  \prod_{j<k}\left(\xi_j-\xi_k\right)^{4a^2}  \\
& \prod_{j<k}\left(x_j-\zeta_k\right)^{2a(a+b)}  \prod_{j<k}\left(\zeta_j-\zeta_k\right)^{4(a+b)^2} 
\\
& \prod_{j}(x_i-u)^{a\sigma_u}
\prod_{j}(\xi_j-u)^{-2a\sigma_u}
\prod_{j}(\zeta_j-u)^{2(a+b)\sigma_u}
\end{aligned}
\end{equation}

In the unit disk $\mathbb{H}$, if we set $u=\infty$, then we have

\begin{equation}
\begin{aligned}
&\Phi_{\kappa}\left(x_1, \ldots, x_{n}, \xi_1,\xi_2,\ldots,\xi_m,\zeta_1,\zeta_2,\ldots,\zeta_q\right)=  \\
&\prod_{i<j}\left(x_i-x_j\right)^{a^2} \prod_{j<k}\left(x_j-\xi_k\right)^{-2a^2}  \prod_{j<k}\left(\xi_j-\xi_k\right)^{4a^2}  \\
& \prod_{j<k}\left(x_j-\zeta_k\right)^{2a(a+b)}  \prod_{j<k}\left(\zeta_j-\zeta_k\right)^{4(a+b)^2} 
\end{aligned}
\end{equation}

\begin{itemize}
    \item[(1)] $(-2a)\cdot a=-\frac{4}{\kappa}$.  $\xi_i=z_j$ is a singular point of the type $\left(\xi_i-z_j\right)^{-4 / \kappa}$.
     \item[(2)] $(-2a)\cdot (-2a)=\frac{8}{\kappa}$. $\xi_i=\xi_j$ is a singular point of of the type $(\xi_i-\xi_j)^{\frac{8}{\kappa}}$
     \item[(3)] $(-2a)\cdot (b-\frac{(n-2m)a}{2}-q(a+b))=\frac{2(n-2m+2)}{\kappa}+q$.  $\xi=u$ and $\xi=u^*$ are singular points of the type $(\xi_i-u)^{\frac{2(n-2m+2)}{\kappa}+q}$ and $(\xi_i-u^*)^{\frac{2(n-2m+2)}{\kappa}+q}$
     \item[(4)] $2(a+b)\cdot (2b-(n-2m)a-2q(a+b))=\frac{(1-q)\kappa}{2}-n+2m-2$.  $\xi_i=u$ is a singular point of the type $(\xi_i-u)^{\frac{(1-q)\kappa}{2}-n+2m-2}$.
\end{itemize}

For $q=1$, $\zeta_1=u$ is one singular point of degree $-n+2m-2$.
We have only one choice for screening contours to integrate $\zeta_1$, the circle $C(u,\epsilon)$ around $u$ with radius $\epsilon$, this gives the excited solution.
    
In this case, for $m \leq \frac{n+2}{2}$ and a $(n,m)$ chordal link pattern $\alpha$, we can choose $p$ non-intersecting Pochhammer contours $\mathcal{C}_1,\mathcal{C}_2,\ldots,\mathcal{C}_m$ surrounding pairs of points (which correspond to links in a chordal link pattern) to integrate $\Phi_{\kappa}$, we obtain
\begin{equation}
    \mathcal{K}^{(m,n)}_{\alpha}(\boldsymbol{z}):=\oint_{\mathcal{C}_1} \ldots \oint_{\mathcal{C}_m}\oint_{C(u,\epsilon)} \Phi_\kappa(\boldsymbol{z}, \boldsymbol{\xi},\zeta_1) d \xi_m \ldots d \xi_1 d\zeta_1.
\end{equation}
\end{itemize} 

Note that the charges at $u$ is given by $\sigma_u=(2m-n-2)a$
$$\lambda_{(b)}(u)= \frac{(2m-n)(2m-n-2)}{\kappa}-\frac{2m-n-2}{2}$$ 
The chordal excite solution $\mathcal{K}^{(m,n)}_{\alpha}$ 
satisfies the null vector equations (\ref{null vector equation for Screening solutions}) and Ward's identities (\ref{Ward identities for screening solutions}) with above $\lambda_{(b)}(u)$.

For $q\geq 2$, since $u$ is the only singular points for screening charges,
it is impossible to choose two non-intersecting contours for $\{\zeta_1,\zeta_2,\ldots,\zeta_q \}$.

We conjecture that the set of screening solutions constructed via Coulomb gas integrals indexed by chordal link patterns forms a basis for the space of solutions to the null vector equations and Ward identities in the presence of a marked boundary point.

\section{Relations to Calogero-Moser system}

\subsection{Null vector equations and quantum Calogero-Moser system}

In this section, we establish parallel connections between multiple chordal SLE$(\kappa)$ systems and the quantum Calogero--Moser system. Specifically, we show that any partition function satisfying the null vector equations corresponds to an eigenfunction of the quantum Calogero--Moser Hamiltonian, as first discovered in~\cite{Car04}.

\begin{proof}[Proof of theorem (\ref{CM results kappa>0})]
\
\begin{itemize}

\item[(i)] Recall that the null vector differential operator $\mathcal{L}_j$ is given by

\begin{equation}
\begin{aligned}
\mathcal{L}_j= & \frac{\kappa}{2}\left(\frac{\partial}{\partial x_j}\right)^2 +\sum_{k \neq j}\left( \frac{2}{x_k-x_j} \frac{\partial}{\partial x_k}+\left(1-\frac{6}{\kappa}\right)\frac{1}{(x_k-x_j)^2}\right) .
\end{aligned}    
\end{equation}

Then, the null vector equations for $\psi(\boldsymbol{x})$ can be written as

\begin{equation}
    \mathcal{L}_j \psi(\boldsymbol{x})= 0
\end{equation}
for $j=1,2,\ldots,n$.

To simplify the formula, we introduce the notation,
$$
f(x)= \frac{2}{x}, \quad f_{j k}=f\left(x_j-x_k\right), \quad F_j=\sum_{k \neq j} f_{j k} .
$$

$$
f^{\prime}(x)=-\frac{1}{2x^2}, \quad f^{\prime}_{j k}=f^{\prime}\left(x_j-x_k\right), \quad F^{\prime}_j=\sum_{k \neq j} f^{\prime}_{j k} 
$$

Using this notation, we have
$$
\mathcal{L}_j=\frac{\kappa}{2} \partial_j^2+\sum_{k \neq j}f_{kj} \partial_k+\sum_{k \neq j}(1-\frac{6}{\kappa}) f_{j k}^{\prime}
$$

with $\partial_j = \frac{\partial}{\partial x_j}$ and the Calogero-Moser hamiltonian can be written as
\begin{equation}
H_n(\beta)=-\sum_j\left(\frac{1}{2} \partial_j^2+\frac{\beta(\beta-2)}{16} F_j^{\prime}\right) \text {. }
\end{equation}
where $\beta=\frac{8}{\kappa}$. 
\\ \indent
To relate the null-vector equations to the Calogero-Moser system, we sum up the null-vector operators. Let

\begin{equation}
\mathcal{L}=\sum_j \mathcal{L}_j=\frac{\kappa}{2} \sum_j \partial_j^2+\sum_j\left(F_j \partial_j+h F_j^{\prime}\right)    
\end{equation}

Then the partition functions $\psi(\boldsymbol{x})$ are eigenfunctions of $\mathcal{L}$ with eigenvalue $0$. 
\begin{equation}
\mathcal{L} \psi(\boldsymbol{x})=0
\end{equation}

Recall that $$\Phi_{r}(\boldsymbol{x})=\prod_{1 \leq j<k \leq n}\left(x_j-x_k\right)^{-2r}$$

From the properties $\partial_j \Phi_r=-r \Phi_r F_j$ and $\sum_j F_j^2=-2 \sum_j F_j^{\prime}$, we can check that
$$
\Phi_{-\frac{1}{\kappa}} \cdot \mathcal{L} \cdot \Phi_{\frac{1}{\kappa}}=\kappa H_n\left(\frac{8}{\kappa}\right)
$$
which implies
$$
\tilde{\psi}(\boldsymbol{x})=\Phi_{\frac{1}{\kappa}}^{-1}(\boldsymbol{x})\psi(\boldsymbol{x})
$$
is an eigenfunction of the Calogero-Moser hamiltonian $H_n\left(\frac{8}{\kappa}\right)$, with eigenvalue
\begin{equation}
E=0.
\end{equation}  
\item[(ii)] see Equation (\ref{commutation relation of generators}) in Theorem (\ref{commutation relation for chordal with marked point u infty}).
\end{itemize}
\end{proof}

\section*{Acknowledgement}
I express my sincere gratitude to Professor Nikolai Makarov for his invaluable guidance. I am also thankful to Professor Eveliina Peltola and Professor Hao Wu for their enlighting discussions.

\end{document}